\documentclass[reqno,oneside,12pt]{amsart}

\usepackage[english]{babel}
\usepackage[utf8]{inputenc}
\usepackage{amsfonts,amsmath,amsthm}
\usepackage{stmaryrd}
\usepackage{thmtools,thm-restate}
\usepackage[T1]{fontenc}
\usepackage{enumitem}
\usepackage{varioref}
\usepackage[all]{xy}
\usepackage{units}
\usepackage{hyperref}
\usepackage{graphicx}
\usepackage{amssymb}
\usepackage{mathabx}
\usepackage{times,mathptm, mathtools}
\usepackage{etoolbox}
\usepackage{tikz-cd}
\usepackage{mathrsfs}
\usepackage{mathdots}
\usepackage[left=2.4cm, right=2.4cm]{geometry}
\usepackage{comment}

\docsvlist{A,B,C,D,E,F,G,H,I,J,K,L,M,N,O,P,Q,R,S,T,U,V,W,X,Y,Z}

\docsvlist{A,B,C,D,E,F,G,H,I,J,K,L,M,N,O,P,Q,R,S,T,U,V,W,X,Y,Z}

\docsvlist{A,B,C,D,E,F,G,H,I,J,K,L,M,N,O,P,Q,R,S,T,U,V,W,X,Y,Z}

\docsvlist{A,B,C,D,E,F,G,H,I,J,K,L,M,N,O,P,Q,R,S,T,U,V,W,X,Y,Z}

\docsvlist{A,B,C,D,E,F,G,H,I,J,K,L,M,N,O,P,Q,R,S,T,U,V,W,X,Y,Z}

\renewcommand{\hat}{\widehat}
\newcommand{\R}{\mathbf{R}}
\newcommand{\C}{\mathbf{C}}
\newcommand{\Q}{\mathbf{Q}}

\newcommand{\N}{\mathbf{N}}
\newcommand{\A}{\mathbf{A}}

\newcommand{\K}{\mathbf{K}}
\renewcommand{\P}{\mathbf{P}}

\renewcommand{\div}{\operatorname{div}}
\DeclareMathOperator{\Cartier}{Cartier}
\DeclareMathOperator{\Jac}{Jac}
\DeclareMathOperator{\Pic}{Pic}
\DeclareMathOperator{\Div}{Div}
\DeclareMathOperator{\Gal}{Gal}
\DeclareMathOperator{\ord}{ord}
\newcommand{\OO}{\mathcal O}

\DeclareMathOperator{\Supp}{Supp}

\DeclareMathOperator{\Per}{Per}

\DeclareMathOperator{\spec}{Spec}

\DeclareMathOperator{\Aut}{Aut}

\DeclareMathOperator{\an}{an}

\DeclareMathOperator{\supp}{Supp}
\DeclareMathOperator{\ddc}{dd^c}

\newtheorem{thm}{Theorem}[section]
\newtheorem{thm*}{Theorem}
\newtheorem{bigthm}{Theorem}
\newtheorem{prop}[thm]{Proposition}
\newtheorem{cor}[thm]{Corollary}
\newtheorem{lemme}[thm]{Lemma}

\theoremstyle{definition}

\newtheorem{rmq}[thm]{Remark}

\AtBeginEnvironment{dfn}{%
  \setlist[enumerate]{label={(\roman*)}}
}

\AtBeginEnvironment{thm}{%
  \setlist[enumerate,1]{label={(\arabic*)}}
}

\AtBeginEnvironment{prop}{%
  \setlist[enumerate,1]{label={(\arabic*)}}
}

\AtBeginEnvironment{lemme}{%
  \setlist[enumerate,1]{label={(\arabic*)}}
}

\begin{document}

\title[Uniform bound for families of regular plane polynomial automorphisms]{Uniform bound on common periodic points for families of regular plane polynomial automorphisms}
\author{Marc Abboud, Yugang Zhang}
\address{Marc Abboud, Institut de mathématiques, Université de Neuchâtel
\\ Rue Emile-Argand 11 CH-2000 Neuchâtel}
\email{marc.abboud@normalesup.org}
\address{Yugang Zhang, Institut de Mathématiques de Bourgogne (UMR IMB), SCIENCES MIRANDE
\\ 9 AVENUE ALAIN SAVARY}
\email{yugang.zhang@ube.fr}

\thanks{The first author acknowledges support by the Swiss National Science Foundation Grant “Birational transformations of higher dimensional varieties” 200020-214999. The second author acknowledges support by the programme ATRACT of Région Bourgogne-Franche-Comté under the project ADYAUS (RECH-ATRAC-000012) and the French National Research Agency under the project DynAtrois (ANR-24-CE40-1163)}

\maketitle

\begin{abstract}
    Given two one-dimensional families $f$ and $g$ of regular plane polynomial automorphisms parameterised by an algebraic curve $B$, all defined over some number field $K$, such that one of them is dissipative, we prove that at any parameter $b\in B(\C)$, either $f_b$ and $g_b$ share a common iterate, or the number of their common periodic points $\Per(f_b) \cap \Per(g_b)$ is bounded by a uniform constant $D$ (independent of the parameter $b$). We thus extend a result of Mavraki and Schmidt for rational maps to our setting.
\end{abstract}

\section{Introduction}\label{sec:intro}
\subsection{Main results}
A \emph{regular plane (polynomial) automorphism} is a polynomial automorphism $f\in \mathrm{Aut}(\A^2)$ such that its indeterminacy point $p_+$ in $\P^2$ is distinct from the indeterminacy point $p_-$ of its inverse $f^{-1}$ (\cite{zbMATH01908328}). Any plane polynomial automorphism with its \emph{first dynamical degree} (\cite{friedlandDynamicalPropertiesPlane1989,zbMATH05004661}) $\lambda(f)>1$ (equivalently, with positive topological entropy if over $\C$) is conjugate (in $\mathrm{Aut}(\A^2)$) to such a map (\cite{friedlandDynamicalPropertiesPlane1989}). Essentially they are the only polynomial automorphisms of the affine plane with rich dynamics up to conjugation.

An \emph{(arithmetic) family $f=(f_t)_{t\in B}$ of regular plane polynomial automorphisms parametrized by $B$} consists of a number field $K$, an algebraic curve $B$, and a morphism $f\colon B\times\A^2 \to B\times\A^2$, all defined over $K$, such that for every parameter $b\in B(\overline{K})$, the specialization $f_b$ is a regular plane automorphism. Fix an archimedean place so that there is an embedding $K\to \C$.  Associated to such a family is the \emph{Jacobian map} $\Jac(f)\colon B \to \A^1_\C$, defined by $\Jac(f)(b)=\Jac(f_b)$. The family is called \emph{dissipative} (resp. \emph{conservative}) if $|\Jac(f)(b)|<1$ (resp. $|\Jac(f)(b)|=1$) for all $b\in B(\overline{K})$. 

As a corollary of the seminal work~\cite{dujardinDynamicalManinMumford2017} of Dujardin and Favre (on Manin-Mumford problem), if $f$ and $g$ are defined over $\C$, and $|\Jac(f)|\neq 1$, then they have only finitely many common periodic points unless they share a common iterate. It is therefore natural, and also motivated by earlier works (c.f., Sect.~\ref{subsection: intro_background}) in arithmetic and complex dynamics, to ask whether there exists a uniform bound on the number of common periodic points. We partially address this question in the one-dimensional case.

\begin{bigthm}\label{bigthmA}
    Let $f$ and $g$ be two arithmetic families of regular plane polynomial automorphisms and suppose $f$ is dissipative. Then there exist a positive constant $D> 0$ depending only on $f$ and $g$, and two positive integers $N,M>0$, such that for any $b \in B ( \C)$, either
  \begin{equation*}
    \# \Per (f_b) \cap \Per (g_b) \leq D,
    \label{eq:uniformperiodicpoints}
  \end{equation*}
  or
  \begin{equation}\label{eq:equalitynm}
      f^N_b = g^M_b. 
  \end{equation}
  And the equality~\eqref{eq:equalitynm} can appear for at most finitely many parameters unless $f^N = g^M$ globally.
\end{bigthm}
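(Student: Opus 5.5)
We follow the strategy of Mavraki--Schmidt (and DeMarco--Krieger--Ye) for rational maps: combine an arithmetic equidistribution argument on the generic fibre with a quantitative height inequality, in order to get uniformity in $b$. The basic objects are the canonical heights. Over the function field $K(B)$, the Green functions $G^\pm_{f}$ and $G^\pm_{g}$ give height functions $\hat h_{f} = \hat h_f^+ + \hat h_f^-$ and $\hat h_g$ on $\A^2(\overline{K(B)})$. Specializing at $b\in B(\overline K)$ gives fibrewise heights $\hat h_{f_b}$ and $\hat h_{g_b}$, which vanish on $\Per(f_b)$ and $\Per(g_b)$ respectively. We view the family as a dynamical system on the surface $B\times \A^2$, with adelic metrized line bundles (in the sense of Yuan--Zhang) built from the fibrewise Green functions. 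The first step is to check that these metrics are continuous and semipositive along $B$. This comes from the regularity assumption $p_+\neq p_-$ uniformly in $b$, possibly after discarding finitely many parameters where the degeneration must be handled by a compactification $\overline{B}$ and a local analysis near the punctures.

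\emph{Step 1: nondegenerate pairing and height inequality.} Assume $f^N\neq g^M$ globally for all $N,M$. Then, over the generic point, $\hat h_f\neq \hat h_g$. Indeed, by the rigidity of Dujardin--Favre, equality of heights would force equality of the equilibrium measures $\mu_{f_b}=\mu_{g_b}$ for all $b$, and hence a common iterate. Dissipativity of $f$ is used exactly here, through Dujardin--Favre. From this we aim to prove a Bogomolov-type height inequality: there are constants $c_1,c_2>0$ such that
\[
\hat h_{f_b}(x)+\hat h_{g_b}(x)\ \geq\ c_1\,\big(\hat h_{f_b}(x)+ h_B(b)\big)-c_2
\]
for all $b$ and all $x$ outside a proper Zariski-closed subset. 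Here we use positivity of the arithmetic intersection number $\big(\overline{L}_f-\overline{L}_g\big)^2\cdot \overline{L}\cdots$, following the DeMarco--Mavraki strategy, on the threefold $B\times\P^2$ or on a suitable compactification. We expect this to be the main obstacle. The Green functions of a regular automorphism are only plurisubharmonic on $\A^2$ and are singular at $p_\pm$, so the metrics on $\OO(1)$ are not continuous adelic metrics on $\P^2$. We would first try to work with $\hat h^+ + \hat h^-$ via Kawaguchi's construction and to approximate by model metrics with controlled error.

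\emph{Step 2: uniform bound for large-height parameters.} If $h_B(b)$ is large, a common periodic point $x$ has $\hat h_{f_b}(x)=\hat h_{g_b}(x)=0$. The inequality of Step 1 then forces $x$ into the exceptional closed subset, whose fibres have bounded cardinality. This gives a uniform bound $D_1$ for these $b$.

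\emph{Step 3: small-height parameters.} For $h_B(b)$ bounded, we argue by contradiction. Suppose there is a sequence $b_n$ with $\#\Per(f_{b_n})\cap\Per(g_{b_n})\to\infty$ and no common iterate. Passing to the Galois orbits of common periodic points, which are of small height for both systems, we apply the arithmetic equidistribution theorem of Yuan--Zhang on the total space. This gives equidistribution towards both $\mu_f$ and $\mu_g$ (on a subsequence, also in the fibre over a limit parameter $b_\infty\in B(\C)$). Hence $\mu_{f_{b_\infty}}=\mu_{g_{b_\infty}}$, and Dujardin--Favre again gives a common iterate at $b_\infty$. To pass from the limit to a contradiction we need a fibrewise quantitative version. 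We expect to use the ``uniform in families'' nondegeneracy: the measure $\mu_{f_b}-\mu_{g_b}$ has energy bounded below on compacts away from the set $Z=\{b: f^N_b=g^M_b\}$. This set $Z$ is Zariski closed, hence finite unless $f^N=g^M$ globally, which also proves the last assertion of Theorem~\ref{bigthmA}. Near points of $Z$, the local integers $N,M$ stabilise, which supplies the uniform $N,M$ of the statement. Combining the bounds from Steps 2 and 3 gives $D$.
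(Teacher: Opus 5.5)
There is a genuine gap, and it sits exactly where the two-dimensional difficulty lies. In Step~2 you assert that the exceptional Zariski-closed set of your height inequality has fibres of bounded cardinality. That set, however, is a proper closed subset of the threefold $B\times\A^2$. It may therefore contain a surface dominating $B$, whose fibre over a general $b$ is a curve $C_b\subset\A^2$. Nothing in your argument prevents arbitrarily many common periodic points of $f_b,g_b$ from lying on $C_b$. The same possibility also destroys the genericity you need to apply Yuan--Zhang equidistribution in Step~3.

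Without a Jacobian hypothesis, the paper's method indeed only shows that the small points lie on a curve of bounded degree (Theorem~\ref{mainthm2}). The paper rules out the curve case by applying equidistribution to $p_1^*\overline R+p_2^*\overline\theta_g^{\pm}$ on the Zariski closure $Z$ of the small points. Bigness on $Z$ comes from $\deg(\theta^\pm_g|_C)>0$ (Proposition~\ref{prop:restriction-to-curve-has-positive-degree}). This makes $dd^cG^+_{g_b}|_{Z_b}$ and $dd^cG^-_{g_b}|_{Z_b}$ proportional for almost every $b$. Dujardin--Favre's Theorem~\ref{thm:jacobian-1} then gives $|\Jac(g_b)|=|\Jac(f_b)|=1$, contradicting dissipativity.

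That is where dissipativity is really used, together with Theorem~\ref{thm:dissipativecommoniterate} at the finitely many exceptional parameters. In your Step~1 it plays no role, since $\mu_{f_b}=\mu_{g_b}\Rightarrow$ common iterate is Lamy's Theorem~\ref{prop:same-measure-common-iterate}, valid for any Jacobian. Note also that dynamical independence does not give the non-degeneracy you want. For simultaneously isotrivial (e.g.\ constant) independent families, no inequality with $h_B(b)$ on the right-hand side can hold. This is why Proposition~\ref{prop:current-not-vanishing} must exclude that case using Gauthier--Vigny.

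The other load-bearing steps are not supplied either. The height inequality of Step~1 is left open, as you say. Step~3 speaks of equidistribution ``in the fibre over a limit parameter $b_\infty$''. But distinct $b_n$ of bounded height have degrees tending to infinity, and their Galois orbits spread over $B(\C)$ rather than concentrating at one parameter. Your energy lower bound for $\mu_{f_b}-\mu_{g_b}$ away from $Z$ is also an unproved quantitative statement.

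Finally, $\{b:\exists N,M,\ f_b^N=g_b^M\}$ is only a countable union of the Zariski-closed sets $\{f_b^N=g_b^M\}$. So neither its finiteness nor the uniformity of $N,M$ follows from Zariski-closedness. In the paper, finiteness is a consequence of the uniform bound: a common iterate forces $\Per(f_b)=\Per(g_b)$, which is infinite.

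The paper's route needs none of Steps~1--3. Yuan--Zhang equidistribution on the quasi-projective fibred product $B\times\A^2\times\A^2$ requires only $R^3>0$, with no control on $h_B(b)$. The contradiction comes from $\mu_{f_b}=\mu_{g_b}$ for $\pi_*c_1(\overline R)^3$-almost every $b$, a measure that does not charge the countable set of common-iterate parameters (Proposition~\ref{prop:non-zariski-dense-Zper}). Transcendental $b\in B(\C)$, where $h_B(b)$ is undefined, are handled through the generic fibre.
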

We can not expect to have uniform bound over all parameters. Consider for example 
        \begin{align*}
            f_b(x,y)\coloneqq (y,y^2+b-1/2\,x) \ \ \ \mathrm{and}\ \ \ g_b(x,y)\coloneqq (y,y^2-(1/2 +b)x).
        \end{align*}
Then over $b=0$, $f_0=g_0$.

Two regular plane polynomial automorphisms $f$ and $g$ are said to be \emph{(dynamically) independent} if there exist no integers $n,m\in\N^*$ such that $f^n=g^m$. Theorem~\ref{bigthmA} is a consequence of the following, where we relax the condition on the Jacobians, but we lose some information on the exceptional parameters.

\begin{bigthm}\label{bigthmB}
    Let $f$ and $g$ be two dynamically independent families of regular plane polynomial automorphisms. Suppose that the set 
    $\{|\Jac(f)|=1 \} \cap \{|\Jac(g)|=1 \} \subset B(\C)$ is discrete. Then there exist a positive constant $D> 0$, depending only on $f$ and $g$, such that for all but finitely many $b \in B (\C)$, 
  \begin{equation}
    \# \Per (f_b) \cap \Per (g_b) \leq D,
    \label{eq:uniformperiodicpoints}
  \end{equation}
\end{bigthm}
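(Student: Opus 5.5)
We follow the strategy of DeMarco--Krieger--Ye and Mavraki--Schmidt: quantitative equidistribution for adelic heights on the total space of the family, combined with a uniform lower bound on a "distance" between the dynamics of $f_b$ and $g_b$.

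\textbf{Step 1 (height comparison).} Over $\overline{K}$, Kawaguchi and Lee give each regular automorphism $f_b$ a canonical height $\hat h_{f_b}=\hat h^+_{f_b}+\hat h^-_{f_b}$. It vanishes exactly on periodic points and decomposes as a sum of local Green functions $G^\pm_{f_b,v}$ over the places $v$. We introduce the mutual energy
\[
E(b)=\sum_v \int \bigl(G_{f_b,v}-G_{g_b,v}\bigr)\,\ddc\bigl(G_{f_b,v}-G_{g_b,v}\bigr),
\]
or, more concretely, an adelic pairing $\langle \hat h_{f_b}-\hat h_{g_b},\hat h_{f_b}-\hat h_{g_b}\rangle$. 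Choose a model $\overline{B}$ of $B$ compactifying the family. The Green functions then vary as continuous (in fact Hölder) adelic metrics in $b$. We expect this to follow from uniform estimates on the filtration $V^\pm$ near $p_\pm$, via Bedford--Smillie and Favre--Jonsson type arguments.

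\textbf{Step 2 (quantitative equidistribution).} Let $F\subset \Per(f_b)\cap\Per(g_b)$ be a Galois-invariant set with $\#F$ large. We apply a Hölder-type quantitative equidistribution theorem (à la Favre--Rivera-Letelier, adapted to the two-dimensional setting through slices by the stable and unstable currents) to the height-zero points of $F$. This should give
\[
E(b)\ \lesssim\ \frac{\hat h_{f}(b)+\hat h_g(b)+1}{\#F^{\,\epsilon}}
\]
for some $\epsilon>0$. Here the parameter heights are heights on $B$ induced by the families, in the style of Gauthier--Vigny.

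\textbf{Step 3 (lower bound).} We need the reverse inequality
\[
E(b)\ \ge\ c\,\bigl(h_B(b)\bigr)-C
\]
for all but finitely many $b$, with $c>0$. Since the families are dynamically independent, the difference of the canonical heights on the generic fiber is nonzero. By Dujardin--Favre rigidity, non-equal Green functions force distinct equilibrium measures $\mu_{f_b}\neq\mu_{g_b}$, and the discreteness hypothesis on $\{|\Jac(f)|=1\}\cap\{|\Jac(g)|=1\}$ is used exactly here: where the measures coincide, the Dujardin--Favre argument gives a common iterate only off the conservative locus. We then invoke an arithmetic Hodge index / degeneration argument over $B$. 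As $b\to$ a puncture, $E(b)$ grows proportionally to $h_B(b)$ by Yuan--Zhang and Mavraki--Schmidt style adelic intersection theory. Compactness of the remaining parameters, together with continuity of $E$ and its positivity away from finitely many $b$, then gives the uniform lower bound.

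\textbf{Step 4 (combination and extension to $\C$).} Combining Steps 2 and 3 bounds $\#F$ uniformly for $b\in B(\overline{K})$ outside a finite set. For transcendental $b\in B(\C)$, a specialization argument transfers the bound, since the number of common periodic points of bounded period is constructible in $b$. The same height-free Archimedean estimate also works directly, as in DeMarco--Krieger--Ye.

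The main obstacle is Step 3, and in particular the Archimedean contribution and the behavior near the degenerations of $B$. We would first try to prove that each $G^\pm_{f_b}$ has a continuous extension as a metrized line bundle over $\overline{B}\times\P^2$ blown up at $p_\pm$, so that Yuan--Zhang's adelic line bundle formalism applies.
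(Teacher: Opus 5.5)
Your proposal has a genuine gap, and it sits exactly where dimension two differs from $\P^1$. Step~2 applies a H\"older quantitative equidistribution estimate to an arbitrary Galois-invariant set $F\subset\Per(f_b)\cap\Per(g_b)$. No arithmetic quantitative equidistribution theorem of this kind is available for H\'enon maps. Worse, the mechanism cannot work as you intend. On a surface, points of small height equidistribute towards $\mu_{f_b}$ only along \emph{generic} sequences, and nothing a priori prevents a large set of common periodic points from lying on a curve $C\subset\A^2_b$. In that case its counting measure accumulates on $C$ and is close to neither $\mu_{f_b}$ nor $\mu_{g_b}$. This is not hypothetical: for a conservative map with a time-reversal symmetry $\sigma$, the fixed curve of $\sigma$ carries infinitely many periodic points. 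In addition, your $E(b)$ is not yet a number, since it integrates a function against a $(1,1)$-current on a surface, and you give no positivity property for any repaired version. So the lower bound $E(b)\geq c\,h_B(b)-C$ in Step~3 is asserted without a mechanism.

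The same problem shows in how you use the Jacobian hypothesis. Equal equilibrium measures already force a common iterate with no condition on the Jacobian (Lamy, Theorem~\ref{prop:same-measure-common-iterate}). So that is not where discreteness of $\{|\Jac(f)|=1\}\cap\{|\Jac(g)|=1\}$ enters. The hypothesis is needed precisely to exclude the curve scenario, via Dujardin--Favre's Theorem~\ref{thm:jacobian-1}: if $(G^+-\alpha G^-)|_C$ is harmonic on a curve $C$, then $|\Jac|=1$. Without it the paper only obtains Theorem~\ref{mainthm2}, namely that small points lie on a curve of bounded degree.

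The paper avoids quantitative estimates entirely. It proves the non-degeneracy $R^3>0$ for $\overline R=\overline\theta_f+\overline\theta_g$ (Proposition~\ref{prop:current-not-vanishing}), using the local inequality of Proposition~\ref{prop:inequality-intersection-number}, the polynomial-hull description of $\{G_f=0\}$, Theorem~\ref{thm:GVhenon}, and a separate analysis of isotrivial families. It then deduces from Yuan--Zhang equidistribution on $B\times\A^2\times\A^2$ that the set $Z_\varepsilon$ of small points is not Zariski dense, and analyses its Zariski closure $Z$:
\begin{itemize}
\item If the fibres of $Z\to B\times\A^2$ are finite, generic flatness gives the uniform bound.
\item If they are curves, equidistribution on $Z$ for $p_1^*\overline R+p_2^*\overline\theta_g^{\pm}$ applies; these divisors are big on $Z$ because $\theta^{\pm}$ has positive degree on curves. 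This makes $dd^cG^+_{g_b}|_{Z_b}$ and $dd^cG^-_{g_b}|_{Z_b}$ proportional, hence $|\Jac(g_b)|=|\Jac(f_b)|=1$, for almost every $b$ with respect to a parameter measure that does not charge discrete sets. That is a contradiction.
\end{itemize}
Your Step~4 (constructibility in $b$ for bounded periods, then specialization to transcendental parameters) is fine.
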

\begin{rmq}
    \begin{enumerate}
        \item The set $\{|\Jac(f)|=1 \} \cap \{|\Jac(g)|=1 \}$ is discrete if and only if it is non-polar.
        \item We actually prove a stronger form for the small fibered canonical height points, see Theorem~\ref{thm:small-height}. In the case where we don't have any control over the Jacobians, we have the less strong result Theorem~\ref{mainthm2}.
    \end{enumerate}
\end{rmq}

\subsection{Background and the strategy of Main theorems}\label{subsection: intro_background}
The quadratic polynomials $f_c \coloneqq z^2 + c$, $c \in \mathbf C$, despite their simple form, exhibit highly intricate dynamical behavior. It is known (\cite{bakerPreperiodicPointsUnlikely2011,yuanArithmeticHodgeIndex2017}) that for any $c_1 \neq c_2$, the set of common preperiodic points of $f_{c_1}$ and $f_{c_2}$ is finite . One of the first uniformity results in this dynamical setting is the seminal work~\cite{zbMATH07594475} of DeMarco, Krieger, and Ye , who proved the existence of a uniform bound on the number of common preperiodic points for all $c_1 \neq c_2$. They further conjectured that for any two complex rational maps $f$ and $g$ on the Riemann sphere, either the sets of preperiodic points coincide or the number of common preperiodic points is bounded by a uniform constant. Remark that the case of the family of flexible latt\`{e}s maps \cite{zbMATH07190307} proved by the same authors (before \cite{zbMATH07594475}) implies the uniform Manin-Mumford bound for a two-dimensional complex family of genus 2 curves.

In this direction, Mavraki and Schmidt~\cite{zbMATH08050359} later established the conjecture for one-dimensional arithmetic families of rational maps, while DeMarco and Mavraki~\cite{zbMATH07793888} showed that the conjecture holds on a Zariski open set in the product space of all pairs of rational maps $f$ and $g$ on $\P^1$ over $\C$. Moreover, the strategy in~\cite{zbMATH07793888} permits proving another conjecture of Bogomolov, Fu, and Tschinkel~\cite{zbMATH07085159} about the uniform bound on common torsion points for pairs of elliptic curves, that was first showed by Poineau~\cite{zbMATH08060553}. Ang and Yap\cite{zbMATH08143040} looked at this problem from a statistical perspective.\\

Given two families $f$ and $g$, one may regard them as a single family by considering the pair $(f,g)$. Their common preperiodic points, denoted $\mathrm{Prep}_{f,g}$, may then be identified with the intersection of the set of preperiodic points of $(f,g)$ and the diagonal. Following the strategy developed by DeMarco, Mavraki, and Schmidt, our goal is to show that the set $\mathrm{Prep}_{f,g}$ is not Zariski dense (after passing to suitable fibered powers) using Yuan–Zhang’s equidistribution theorem~\cite{yuanAdelicLineBundles2026} (equivalently, we could also have used Gauthier's theorem~\cite{zbMATH08144876} in our case). The idea of using arithmetic equidistribution theory originates in Ullmo’s proof of the Bogomolov conjecture for curves~\cite{zbMATH01192412}, which was later extended to abelian varieties by S.~Zhang~\cite{zbMATH01192411}. See also, for example, \cite{zbMATH07582363,arXiv:2101.10272,ji2026geometricapproachuniformboundedness} and Gao’s survey~\cite{arXiv:2104.03431}, together with the references therein, for some other uniformity results in arithmetic geometry.

To this end, we need to verify a non-degeneracy condition for the diagonal. This is achieved in the key Proposition~\ref{prop:current-not-vanishing}, where we show that the diagonal is degenerate if and only if the families $f$ and $g$ are dynamically independent or \emph{simultaneously isotrivial}. Proposition~\ref{prop:current-not-vanishing} relies on three main ingredients: an intersection inequality (Proposition~\ref{prop:inequality-intersection-number}) derived from Yuan–Zhang’s theory of adelically metrized line bundles~\cite{yuanAdelicLineBundles2026}; a characterization of the support of the equilibrium measure for regular plane polynomial automorphisms at all places by Dujardin and Favre~\cite{dujardinDynamicalManinMumford2017}; and the geometric height theory for families of regular plane polynomial automorphisms developed by Gauthier and Vigny~\cite{zbMATH07714404}. Although in $\P^n$ for $n \geq 2$, it's not true that either $f$ and $g$ have the same set of preperiodic points or there exists a uniform bound (see a recent conjecture~\cite{demarco2026geometrypreperiodic}), a rigidity result~\cite{dujardinDynamicalManinMumford2017} of Dujardin and Favre concerning the forward and backward Green currents of regular plane polynomial automorphisms allows us to conclude in our case.

Finally, let us mention that in the case of rational maps, or more generally of polarized endomorphisms, Gauthier and Vigny~\cite{gauthierGeometricDynamicalNorthcott2025} developed a systematic treatment of the geometric canonical height function using pluripotential theory. This approach can be seen explicitly in \cite{zbMATH07793888,zbMATH08050359}, and has also strongly inspired the present work.

\section*{Acknowledgments} We would like to thank Thomas Gauthier for discussions related to this problem. The second author is very grateful to Thomas for his invitation at the Institut de mathématique d'Orsay.

\section{Adelic divisors and line bundles over quasiprojective varieties}\label{sec:}
\subsection{$b$-divisors}\label{subsec:}
Let $U$ be a quasiprojective variety over some field $K$. A \emph{completion} of $U$ is a projective variety $X$ over
$K$ containing $U$ as a dense open subset. If $X,Y$ are two completions of $U$ then we have a canonical birational map
$X \dashrightarrow Y$ induced by the open subset $U$. A \emph{Cartier} $b$-divisor over $U$ is an equivalence class
of a couple $(X,D)$ where $X$ is a projective completion of $U$ and $D$ is a Cartier divisor over $X$, where $(X,D)$ and
$(X', D')$ are equivalent if there exists $\pi : Y \rightarrow X, \omega : Y \rightarrow X'$ such that $\pi^* D = \omega^*
D'$. We write $\Cartier_b (U / K)$ for the space of Cartier $b$-divisors over $U$. Yuan and Zhang in
\cite{yuanAdelicLineBundles2026} have defined a topology on this space called the \emph{boundary topology}. It is
defined as follows, a \emph{boundary divisor} of $U$ is an effective divisor $D_0$ is some completion $X$ of $U$ such
that $\Supp D_0 = X \setminus U$. If $D \in \Cartier_b (U)$ we define the \emph{extended norm} of $D$ with respect to
$D_0$ as 
\begin{equation}
  \parallel D \parallel_{D_0} := \inf \left\{ \epsilon > 0: - \epsilon D_0 \leq D \leq \epsilon D_0 \right\}.
  \label{<+label+>}
\end{equation}
The boundary topology actually does not depend on the choice of the boundary divisor $D_0$. We can take the completion
of $\Cartier_b (U / K)$ with respect to this pseudo-norm. An element in the completion will be called a \emph{$b$-divisor}
over $U$ and we write $\Div_b (U / K)$ for the set of $b$-divisors over $U$. Concretly, an element $D \in \Div_b (U / K)$ is the
data of a sequence $D_i \in \Cartier_b (U)$ such that for every $i, j \geq 0, {D_i}_{|U} = {D_j}_{|U}$ and there exists
a sequence of positive rational numbers $\epsilon_i$ converging to zero such that 
\begin{equation}
  \forall j \geq i, \quad - \epsilon_i D_0 \leq D_j - D_i \leq \epsilon_i D_0.
  \label{<+label+>}
\end{equation}
Such a sequence $(D_i)$ is called a \emph{Cauchy sequence}.
We say that $D \in \Div_b (U / K)$ is strongly nef if it is the limit of a Cauchy sequence of nef Cartier divisors. If
$D_1, \dots, D_n$ are strongly nef divisors over $U$ which are limits of nef Cartier divisors $D_{i,k}$, then one can
check that the intersection numbers
\begin{equation}
  D_{1,k} \cdots D_{n,k}
  \label{<+label+>}
\end{equation}
converge towards a nonnegative real number that we denote by $D_1 \cdots D_n$. This is because if $E_1, \dots, E_n$ and
$E_1 ', \dots., E_n'$ are nef Cartier $b$-divisors such that $E_i \geq E_i'$ then 
\begin{equation}
  E_1 \cdots E_n \geq E_1 ' \cdots E_n '.
  \label{<+label+>}
\end{equation}
\subsection{Over a metrised field}\label{subsec:}
Let $K_v$ be a complete field with respect to a non-trivial absolute value. If $v$ is archimedean then we have that
$K_v = \R$ or $\C$. If $v$ is non-archimedean, we write $\OO_v$ for the valuation ring of $K_v$. If $X$ is a
quasiprojective variety over $K_v$ we write $X^{\an}$ for its Berkovich analytification with respect to $v$. If
$K_v = \C$, then this is exactly the set of closed points $X(\C)$. If $v$ is non-archimedean then this is a locally
compact Hausdorff topological space which contains the set $X(\overline K_v)$ modulo the Galois action as a dense
subset. If the reader is not familiar with Berkovich spaces it is enough for this paper to think of it as $X(\overline
K_v)$ modulo the Galois action. Let $X$ be a projective variety over $K_v$ and $L$ be a $\Q$-line bundle. A
\emph{continuous metric} of $L$ over $X^{\an}$ is the data of a metric for every $x \in X^{\an}$ of the stalk of $L$ at $x$ and
such that for every local section $s$ of $L$ defined over an open subset $V \subset X^{\an}$ we have that the function 
\begin{equation}
  x \in V \mapsto - \log \left| s \right|_x
  \label{<+label+>}
\end{equation} is continuous. A \emph{metrised line bundle} $\overline L$ is the data of $\Q$-line bundle and metric on
it. Let $D$ be a $\Q$-Cartier divisor over $X$, a \emph{Green function} of $D$ over $X$ is a
continuous function $g : X^{\an} \setminus \supp D \rightarrow \R$ such that for every $q \in \supp D$ if $\xi$ is a
local equation of $D$ at $q$ then the function 
\begin{equation}
  x \mapsto g + \log \left| \xi (x) \right|
  \label{<+label+>}
\end{equation}
extends to a continuous function at $q$. A \emph{metrised divisor} $\overline D = (D,g)$ is the data of a $\Q$-Cartier divisor
$D$ and a Green function $g$ of $D$. Every metrised divisor $\overline D$ defines a metrised line bundle $\OO_X (\overline
D)$ canonically by setting $- \log \left| s_D \right| = g$ where $s_D$ is the canonical rational section of $\OO_X
(D)$ such that $\div (s_D) = D$. Conversely, if $\overline L$ is a metrised line bundle, then any rational section
$s$ of $L$ defines a metrised divisor $\hat \div (s)$ by setting $D = \div(s)$ and $g = - \log \left| s \right|$. We say
that $\overline D = (D,g)$ is \emph{effective} if $g \geq 0$, this implies in particular that the divisor $D$ is
effective, in that case we write $\overline D \geq 0$.

Suppose now that $v$ is non-archimedean. Let $D$ be a $\Q$-Cartier divisor over a projective variety $X$ over $K_v$. A
\emph{model} of $X$ is a projective variety $\sX$ over $\OO_v$ such that $X = \sX \times_{\spec \OO_v} \spec K_v$. A
model of $(X,D)$ is the data of a model $\sX$ of $X$ over $\OO_v $ and $\Q$-Cartier divisor $\sD$ over $\sX$ such that
$\sD_{|X} = D$. Any such model divisor induces a metrised divisor $\overline \sD$ as follows: there is a canonical
reduction map $r : X^{\an} \rightarrow \sX$ and if $x \in X^{\an} \setminus \Supp D$ we define $G_{\overline \sD}
(x) = - \log \left| \xi (x) \right|$ where $\xi$ is a local equation of $\sD$ at $r(x)$. There is a similar definition of
model metrised line bundles. If $X$ is of dimension $n$ and $\overline \sL_1, \dots, \overline \sL_n$ are model metrised
line bundles then they define a signed measure 
\begin{equation}
  c_1 (\overline \sL_1)\cdots c_1 (\overline \sL_n)
  \label{<+label+>}
\end{equation}
first defined by Chambert-Loir in \cite{chambert-loirMesuresEquidistributionEspaces2003}. If every $\sL_i$ is nef, then
this is a positive measure of total mass $L_1 \cdots L_n$.

If $K_v = \C$ and $\overline L$ is a metrised line bundle with a plurisubharmonic (psh) metric then there is a
$(1,1)$-closed positive current $c_1 (\overline L)$ associated to $L$ defined locally by 
\begin{equation}
  c_1 (\overline L) = dd^c \left( - \log \left| s \right| \right)
  \label{<+label+>}
\end{equation}
where $s$ is a local non vanishing section of $L$ and by Bedford-Taylor theory we have a well defined positive measure 
\begin{equation}
  c_1 (\overline L_1) \cdots c_1 (\overline L_n)
  \label{<+label+>}
\end{equation}
which is also of total mass $L_1 \cdots L_n$. Over $K_v = \C$ we make the convention that any metrised line bundle is
model. It is \emph{nef} is the associated metric is psh.

If $U$ is a quasiprojective variety over $K_v$ of dimension $n$, a \emph{model metrised divisor} over $U$ is a model metrised divisor over
any completion $X$ of $U$. Following Yuan and Zhang in \cite{yuanAdelicLineBundles2026} we can also define the boundary
topology on the space of model metrised divisors of $U$. In this setting a metrised boundary divisor will be a model metrised
divisor $\overline \sD_0$ such that $D_0$ is a boundary divisor and $G_{\overline \sD_0} > 0$. The definition of the
extended norm with respect to $\overline \sD_0$ is similar to the one in the previous paragraph we have 
\begin{equation}
  \parallel \overline \sD \parallel_{\overline \sD_0} := \inf \left\{ \epsilon > 0 : - \epsilon \overline \sD_0 \leq
  \overline \sD \leq \epsilon \overline \sD_0 \right\}. 
  \label{<+label+>}
\end{equation}
We write $\hat \Div (U/ K_v)$ for the completion of the set of model metrised divisors with respect to the boundary
topology. An element in that completion is called a \emph{metrised divisor} over $U$. Notice that if $\overline D = \lim
\overline \sD_i$ then at the level of Green functions we have that there exists $\epsilon_i > 0$ converging to zero such
that 
\begin{equation}
  - \epsilon_i G_{\overline \sD_0} \leq G_{\overline \sD_j} - G_{\overline \sD_i} \leq \epsilon_i G_{\overline \sD_0}.
  \label{<+label+>}
\end{equation}
In particular the sequence of Green functions $(G_{\overline \sD_i})$ converges uniformly locally towards a function
that we denote by $G_{\overline D}$.
We say that $\overline D \in \hat
\Div (U / K_v)$ is \emph{strongly nef} if it is the limit of a Cauchy sequence of nef model metrised divisors over $U$.
We see in particular that if we forget about the Green functions we recover the notion of $b$-divisors from the previous
paragraph, i.e we have a surjective group homomorphism $\hat \Div(U/ K_v) \rightarrow \Div_b (U / K_v)$ which sends
strongly nef metrised divisor to strongly nef $b$-divisors.

We can define similarly metrised line bundles over $U$ and strongly nef metrised line bundles. Furthermore, if
$\overline L_1, \dots, \overline L_n$ are strongly nef metrised line bundles which are limits of nef model metrised line
bundles $\overline \sL_{i, k}$ then the sequence of measures $c_1 (\overline \sL_{1,k}) \cdots c_1 (\overline
\sL_{n,k})$ weakly converges towards a measure that we denote by 
\begin{equation}
  c_1 (\overline L_1) \cdots c_1 (\overline L_n).
  \label{<+label+>}
\end{equation}
The total mass is given by the geometric intersection number.
\begin{thm}[\cite{yuanAdelicLineBundles2026, guoIntegrationFormulaChern2025}]\label{thm:total-mass-measure}
  Let $\overline L_1, \dots, \overline L_d$ be strongly nef metrised line bundles over a quasiprojective variety $U$ over
  a complete field $K_v$, then  
  \begin{equation}
    \int_{U^{\an,v}} c_1 (\overline L_1)_v\cdots c_1 (\overline L_d)_v = L_1 \cdots L_d.
    \label{<+label+>}
  \end{equation}
\end{thm}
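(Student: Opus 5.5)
The plan is to reduce to the projective model case, where the statement is classical, and then pass to the limit along a Cauchy sequence.

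\emph{Model case.} Suppose first that each $\overline L_i$ is a nef model metrised line bundle on a fixed completion $X$ of $U$, with $\sX$ a common model in the non-archimedean case. Then the measure $c_1(\overline L_1)_v\cdots c_1(\overline L_d)_v$ on $X^{\an}$ has total mass $L_1\cdots L_d$.
\begin{itemize}
\item In the archimedean case this is Bedford--Taylor theory on a compact K\"ahler space. The mass of $\bigwedge dd^c$ of psh metrics is cohomological, so it equals the mass obtained from smooth metrics, which is the degree.
\item In the non-archimedean case this is Chambert-Loir's construction. The measure is $\sum_E (\sL_1\cdots\sL_d\cdot E)\,\delta_{x_E}$, summed over the irreducible components $E$ of the special fibre with their multiplicities. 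Its total mass is the intersection with the whole special fibre, which by flatness equals the degree on the generic fibre.
\end{itemize}

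\emph{Strongly nef case.} Write $\overline L_i=\lim_k \overline\sL_{i,k}$ with each $\overline\sL_{i,k}$ nef model. Refining completions, we may assume that for each $k$ all the $\overline\sL_{i,k}$ live on a common completion $X_k$. By the model case,
\[
\mu_k:=c_1(\overline\sL_{1,k})\cdots c_1(\overline\sL_{d,k})
\]
has mass $L_{1,k}\cdots L_{d,k}$, and this converges to $L_1\cdots L_d$ by the definition of intersections of strongly nef $b$-divisors. We know that $\mu_k$, restricted to $U^{\an}$, converges weakly to $\mu:=c_1(\overline L_1)\cdots c_1(\overline L_d)$. The remaining task is to show that no mass escapes to the boundary $X_k^{\an}\setminus U^{\an}$.

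\emph{Main obstacle: no loss of mass at the boundary.} Weak convergence on the non-compact space $U^{\an}$ only gives $\int\mu\le\liminf \mu_k(X_k^{\an})$ by lower semicontinuity. For the reverse inequality we need a tightness statement: for every $\eta>0$ there should be a compact $C\subset U^{\an}$ with $\mu_k(X_k^{\an}\setminus C)<\eta$ for all large $k$.

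To get this, use the Cauchy condition $-\epsilon_k\overline\sD_0\le \overline\sL_{i,k}-\overline\sL_{i,j}\le\epsilon_k\overline\sD_0$. Sandwich each $\overline\sL_{i,k}$ between nef model bundles built from $\overline\sL_{i,k_0}\pm\epsilon_{k_0}\overline\sD_0$, after adding an ample term so that $\overline\sD_0$ may be assumed nef. Then expand $\mu_k$ multilinearly:
\[
\mu_k=\mu_{k_0}+\sum(\text{mixed terms}),
\]
where each mixed term contains at least one factor $c_1(\overline\sL_{i,k}-\overline\sL_{i,k_0})$.

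The mass near the boundary is controlled by integrating against the Green function $G_{\overline\sD_0}$, which blows up there. Near the boundary, $\int G_{\overline\sD_0}\,d\mu_k$ differs from the corresponding integral for $\mu_{k_0}$ by $O(\epsilon_{k_0})$ times intersection numbers involving $\overline\sD_0$. The monotonicity of intersection numbers for nef divisors keeps these intersection numbers bounded. Taking $C=\{G_{\overline\sD_0}\le M\}$ with $M$ large then gives uniform smallness off $C$.

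The multilinear estimate would be proved via integration by parts. This is the content of the Chambert-Loir--Thuillier integration formula for non-archimedean places, and of its analogue in Guo's work at archimedean places. Once tightness holds, $\int_{U^{\an}}\mu=\lim_k L_{1,k}\cdots L_{d,k}=L_1\cdots L_d$.

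Note also that $U^{\an}$ itself carries no boundary mass. For the model measures $\mu_k$ this holds because the mixed Monge--Amp\`ere measures of continuous metrics do not charge pluripolar sets or proper analytic subsets. For $\mu$ it holds by construction.
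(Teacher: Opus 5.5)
\textbf{The tightness step fails.} The paper does not prove this statement; it imports it from Yuan--Zhang and Guo, so your argument has to stand on its own. The frame is right. The model case and the inequality $\int_{U^{\an}}\mu\le L_1\cdots L_d$ from weak convergence are fine, and everything hinges on showing that no mass escapes to the boundary. But the mechanism you propose, a bound on $\int G_0\,d\mu_k$ uniform in $k$ with $G_0=G_{\overline{\mathcal D}_0}$, is false in general. By lower semicontinuity such a bound would give $\int_{U^{\an}}G_0\,d\mu<\infty$ for the limit measure.

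This already fails for $U=\A^1_\C\subset\P^1$ with $G_0=\log^+|x|+1$. Let $\mu$ be a radial probability measure with bounded density and $\mu\{|x|\ge r\}=1/\log\log r$ for $r\ge e^e$, and put $g(x)=\int(\log|x-y|-\log^+|y|)\,d\mu(y)$.
\begin{itemize}
\item $g$ is continuous and subharmonic, $dd^cg=\mu$, and $g=\log|x|+o(\log|x|)$.
\item For suitable $c_k\to\infty$, the functions $g_k=\max(g,\log|x|-c_k)$ are nef model metrics on $\OO(1)$ with $|g_k-g|\le\epsilon_kG_0$, hence a Cauchy sequence converging to $g$.
\item The limit measure has mass $1$, as the theorem predicts. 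But $\int\log^+|x|\,d\mu=\int^{\infty}dt/\log t=\infty$, so $\int G_0\,d\mu_k\to\infty$.
\end{itemize}
The example shows where your estimate breaks. On $\P^1$ one has $\int G_0\,dd^c(g_k-g_{k_0})=\int(g_k-g_{k_0})\,\omega-(g_k-g_{k_0})(\infty)$. The boundary value $c_{k_0}-c_k$ is not controlled by $|g_k-g_{k_0}|\le\epsilon G_0$, because $G_0=+\infty$ there. In general the term you need is a local intersection of $\overline{\mathcal D}_0$ with $\overline{\mathcal L}_{i,k}-\overline{\mathcal L}_{i,k_0}$. These are two divisors supported on the same boundary, so they do not meet properly, and monotonicity of intersection numbers does not rescue this. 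Separately, you cannot make $\overline{\mathcal D}_0$ nef by adding an ample term, since that changes its support.

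\textbf{What would work instead.} You need a test function whose curvature decays at the same rate as the Cauchy bound grows.
\begin{itemize}
\item Take $\chi_M=\psi(G_0/M)$ with $\psi=1$ on $[0,1]$ and $\psi=0$ on $[2,\infty)$; it is continuous with compact support in $U^{\an}$.
\item Telescope $\mu_k-\mu_{k_0}$ into terms $dd^cg_E\wedge T$, where $|g_E|\le\epsilon_{k_0}G_0$ on $U^{\an}$ and $T$ is a product of the remaining $c_1$'s, and integrate by parts.
\item The $dd^c$ then falls on $\chi_M$, which lives on $\{M\le G_0\le 2M\}$, where $|g_E|\le 2\epsilon_{k_0}M$.
\item One checks that $dd^c\chi_M\wedge T$ has total variation $O(1/M)$, uniformly in $k$, via intersection numbers of an ample class with the $L_{j,k}$.
\end{itemize}
This gives $|\mu_k(\chi_M)-\mu_{k_0}(\chi_M)|=O(\epsilon_{k_0})$ uniformly in $M$ and $k\ge k_0$. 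Let $k\to\infty$, then $M\to\infty$ (using that $\mu_{k_0}$ does not charge the boundary), then $k_0\to\infty$; this yields $\mu(U^{\an})\ge L_1\cdots L_d$. At non-archimedean places the same integration by parts needs Chambert-Loir--Ducros forms or an intersection-theoretic substitute. This boundary analysis is exactly the nontrivial content the paper takes from the cited works.
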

\subsection{Over a projective variety}\label{subsec:}
Let $K$ be a number field and let $X$ be a projective variety over $K$. We write $\cM(K)$ for the set of normalised
absolute values over $K$. A \emph{model adelic line bundle} over $X$ is
the data of a model $\sX$ of $X$ over $\spec \OO_K$ and a $\Q$-line bundle $\sL$ over $\sX$ together with a metric of
$\sL_\C$ over $\sX (\C) = X(\C)$. Write $L = \sL_{|X}$, we write $\overline \sL$ for the data of $\sL$ and the metric of
$\sL_\C$.  For every
$v \in \cM(K)$, a model adelic line bundle defines a model metric of $L$ over $X^{\an, v}$ and a
\emph{height function} $h_{\overline \sL}: X(\overline K) \rightarrow \R$ defined by 
\begin{equation}
  \forall p \in X(\overline K), \quad h_{\overline \sL} (p) = \frac{1}{\# \Gal (\overline K / K) \cdot p} \sum_{q \in
  \Gal (\overline K / K) \cdot p} \sum_{v \in \cM(K)} n_v (- \log \left| \left| s (q) \right| \right|_{\overline \sL,
  v})
  \label{eq:height-function}
\end{equation}
where $s$ is a local section of $L$ not vanishing at $p$ and $n_v$ are rational numbers depending only on $v$ and $K$.
We say that
$\overline \sL$ is nef if $\sL$ is nef, the metric of $\sL_\C$ is plurisubharmonic and $h_{\overline \sL} \geq 0$. If $n
= \dim X$, and $\overline \sL_0, \dots,
\overline \sL_n$ are nef model adelic line bundles their \emph{arithmetic intersection number} is defined inductively as

\begin{equation}
  \overline \sL_0, \dots, \overline \sL_d = \left( \overline \sL_0 \cdots \overline \sL_{d-1}  \right)_{| \div
  (s_d)} + \sum_{v \in \cM(K)} \int_{X^{\an,v}} - \log \left| \left| s_d \right| \right|_{\overline \sL_d,v} c_1
  (\overline \sL_0)_v \cdots c_1 (\overline \sL_{d-1})_v
  \label{eq:intersection-number}
\end{equation}
where $s_d$ is a rational section of $L_d$ over $X$.

\begin{prop}\label{prop:}
  If $\overline \sL_0, \dots, \overline \sL_d$ are nef model adelic line bundles, then 
  \begin{equation}
    \overline \sL_0 \cdots \overline \sL_d \geq 0.
    \label{<+label+>}
  \end{equation}
\end{prop}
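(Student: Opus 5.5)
We argue by induction on $d=\dim X$, using the recursive formula \eqref{eq:intersection-number}. The idea is to choose at each step a section $s_d$ whose local terms are all nonnegative, and to restrict to $\div(s_d)$, which is again a projective variety carrying nef model adelic line bundles. Since both sides are multilinear and the statement is invariant under scaling, we may replace each $\overline\sL_i$ by a positive multiple and assume the $\sL_i$ are genuine line bundles.

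\textbf{Base case.} For $d=0$, $X$ is a finite union of closed points. The intersection number $\overline\sL_0$ is then, up to a positive factor (the degree of the point), the sum over $v\in\cM(K)$ of $n_v(-\log\|s_0\|_v)$ over the Galois conjugates. This is the height $h_{\overline\sL_0}$ at these points, which is $\geq 0$ by the definition of nefness.

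\textbf{Inductive step.} This is where the difficulty lies, because we need a global section $s_d$ of $L_d$ with $-\log\|s_d\|_v\geq 0$ at every place. Such a section makes every integral term in \eqref{eq:intersection-number} nonnegative, since the measures $c_1(\overline\sL_0)_v\cdots c_1(\overline\sL_{d-1})_v$ are positive for nef metrics (Chambert-Loir's measures at finite places, Bedford--Taylor at archimedean ones). It also makes $\div(s_d)$ an effective cycle to which the induction hypothesis applies. When $\sL_d$ is only nef rather than ample, such small sections may not exist, so we proceed as follows.

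\begin{enumerate}
\item Perturb. Fix an ample model adelic line bundle $\overline\sA$ with positive metrics and $h_{\overline\sA}>0$, and replace each $\overline\sL_i$ by $\overline\sL_i+\epsilon\overline\sA$. The intersection number is a polynomial in $\epsilon$, so it suffices to prove nonnegativity for $\epsilon>0$ and let $\epsilon\to 0$.
\item Find small sections. The perturbed bundles are ample with positive metrics and positive heights. By the arithmetic Hilbert--Samuel theorem together with Minkowski's theorem (the Zhang / Gillet--Soulé arithmetic ampleness results), for large $m$ the bundle $m(\sL_d+\epsilon\sA)$ has a nonzero section $s$ over $\sX$ with $\|s\|_{\sup,v}\leq 1$ at the archimedean places. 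At the finite places $\|s\|_v\leq 1$ holds automatically, because $s$ is integral on the model.
\item Conclude. Apply \eqref{eq:intersection-number} with this $s$, divided by $m$. Each integral term is $\geq 0$. The first term is $(\overline\sL_0\cdots\overline\sL_{d-1})|_{\div(s)}$, where $\div(s)$ is an effective cycle whose components are projective varieties of dimension $d-1$. The restrictions of the $\overline\sL_i$ to these components are still nef: nefness of the line bundle, psh metrics and nonnegative heights are all preserved by restriction to subvarieties. So by induction the first term is $\geq 0$.
\end{enumerate}

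The main technical point we expect is making sure that $\div(s)$ taken on the generic fiber matches the term appearing in \eqref{eq:intersection-number}. Vertical components of $\div(s)$ on $\sX$ contribute only nonnegative terms (local intersection numbers of nef bundles on special fibers) and can be absorbed. If this becomes delicate, the fallback is to cite the standard positivity of arithmetic intersection numbers of nef adelic line bundles due to Zhang.
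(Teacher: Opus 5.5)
The paper gives no proof of this proposition. It is quoted as background: it is Zhang's positivity theorem for nef adelic line bundles.

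Your skeleton is the standard one: perturb, pick a small section, apply \eqref{eq:intersection-number}, induct on $\dim X$. Several parts are fine:
\begin{itemize}
\item the base case;
\item the finite places: a regular section on the model has $\|s\|_v\le 1$, so vertical components of $\div(s)$ only feed nonnegative local terms, and the worry you flagged is a non-issue;
\item the restriction step.
\end{itemize}

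The genuine gap is Step~2, which is where all the content sits. Arithmetic Hilbert--Samuel plus Minkowski produce a nonzero $s\in H^0(\mathscr X,m\mathscr M)$ with $\|s\|_{\sup}\le 1$ only when $\overline{\mathscr M}^{\,d+1}>0$. The reason is that the rank of $H^0(m\mathscr M)$ grows like $m^d$, and Minkowski needs $\widehat\chi_{\sup}$ to exceed $\operatorname{rank}\cdot\log 2$. The positivity of the heights of points, which you verified, plays no role in either theorem. For $\overline{\mathscr M}=\overline{\mathscr L}_d+\epsilon\overline{\mathscr A}$,
\begin{equation*}
  (\overline{\mathscr L}_d+\epsilon\overline{\mathscr A})^{d+1}=\overline{\mathscr L}_d^{\,d+1}+\sum_{j=1}^{d+1}\binom{d+1}{j}\epsilon^{j}\,\overline{\mathscr L}_d^{\,d+1-j}\cdot\overline{\mathscr A}^{\,j}.
\end{equation*}
The terms with $j\ge 1$ are $\ge 0$ by your own induction, if $\overline{\mathscr A}$ is chosen with strictly small sections (e.g.\ a twisted Fubini--Study $\mathcal O(1)$). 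But $\overline{\mathscr L}_d^{\,d+1}$ is the self-intersection of a nef bundle on $X$ itself. That is an instance of the statement being proved, not of the induction hypothesis, so the step is circular. Enlarging the metric twist of $\overline{\mathscr A}$ does not help: if $\overline{\mathscr L}_d^{\,d+1}<0$, the twist needed does not go to $0$ with $\epsilon$. If the parenthetical ``Zhang's arithmetic ampleness results'' is meant to carry this step, it carries a statement at least as strong as the proposition itself.

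To close the gap you need a separate argument for self-intersections. One way is to prove by induction on $k$ the quantitative bound
\[
\overline{\mathscr M}^{\,k+1}\cdot Y\ge (k+1)\deg_M(Y)\inf_{Y(\overline K)}h_{\overline{\mathscr M}}
\]
(suitably normalised) for nef $\overline{\mathscr M}$ and $k$-dimensional $Y$. This is equivalent to plain nonnegativity, by twisting the archimedean metric by a constant.
\begin{itemize}
\item Assume the bound in dimension $d-1$, and put $f(\epsilon)=(\overline{\mathscr L}+\epsilon\overline{\mathscr A})^{d+1}$.
\item Whenever $f(\epsilon)>0$ with $\epsilon$ rational, Minkowski does give a small section $s$.
\item Then \eqref{eq:intersection-number}, together with the bound on the components of $\div(s)$, yields $f(\epsilon)\ge d\,(L+\epsilon A)^d\inf h_{\overline{\mathscr L}+\epsilon\overline{\mathscr A}}\ge d\,A^d(\inf h_{\overline{\mathscr A}})\,\epsilon^{d+1}$.
\item Since $f>0$ for $\epsilon\gg 0$ and $f$ is continuous, it cannot reach $0$ on $(0,\infty)$. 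Hence $\overline{\mathscr L}^{\,d+1}=f(0)\ge 0$, and twisting gives the bound in dimension $d$.
\end{itemize}
Only after this is your Step~2 legitimate, since now $(\overline{\mathscr L}_d+\epsilon\overline{\mathscr A})^{d+1}>0$; your Steps~1 and~3 then settle the mixed case. The alternative is to cite Zhang outright, which is what your fallback amounts to and what the paper implicitly does.
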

A \emph{model} adelic divisor is the data of a model $\sX$ and a $\Q$-Cartier divisor $\sD$ over $\sX$ and a Green
function of $\sD_\C$ over $\sX(\C)$. For a model adelic divisor $\overline \sD$, there is a unique associated adelic line bundle $\OO_X (\overline
\sD)$ up to isometry. The height function then can be easily computed: 
\begin{equation}
  \forall p \in (X \setminus \Supp D) (\overline K), \quad h_{\overline \sD} (p) = \frac{1}{\# \Gal (\overline K / K)
  \cdot p} \sum_{q \in \Gal (\overline K / K) \cdot p} \sum_{v \in \cM(K)} n_v G_{\overline \sD, v} (q).  \label{<+label+>}
\end{equation}
We say that $\overline \sD$ is \emph{effective} if for every $v \in \cM(K), G_{\overline \sD, v} \geq 0$. This implies
in particular that the $\Q$-Cartier divisor $\sD$ is effective. We say that $\overline \sD$ is nef if $\OO_X (\overline
\sD)$ is.

\subsection{Over a quasiprojective variety}\label{subsec:}
Let $U$ be a quasiprojective variety over a number field $K$. An adelic line bundle over $U$ is a Cauchy sequence of model
adelic line bundles over projective compactifications of $U$ for the boundary topology defined by Yuan and Zhang in
\cite{yuanAdelicLineBundles2026}. An adelic divisor over $U$ is a Cauchy sequence of
model adelic divisors $\overline \sD_i$ over projective compactifications of $U$ such that $\forall i, j,
{D_i}_{|U} = {D_j}_{|U}$ with respect to the boundary topology. We write $\hat \Div (U/ \OO_K)$ for the set of adelic
divisors over $U$ and $\hat \Pic(U/ \OO_K)$ for the set of adelic line bundles over $U$. If $\overline L$
is an adelic line bundle over $U$ which is the limit of a sequence of model adelic line bundles $\overline \sL_i$, then
it defines a height function $h_{\overline L} : U (\overline K) \rightarrow \R$ by
\begin{equation}
  h_{\overline L} := \lim_i h_{\overline \sL_i}.
  \label{<+label+>}
\end{equation}
An adelic line bundle $\overline L$ is \emph{strongly nef} if it is the limit of nef model adelic line bundles. If
$\overline L_0, \dots, \overline L_d$ are strongly nef, then the \emph{arithmetic intersection number} is defined as
\begin{equation}
  \overline L_0 \cdots \overline L_d := \lim_i \overline \sL_{0,i}, \cdots, \overline \sL_{d,i} \geq 0.
  \label{<+label+>}
\end{equation}
Let $v \in \cM(K)$ be a place of $K$ and write $K_v$ for the completion of $K$ with respect to $v$. Then by restricting
everything to the place $v$ we get a morphism $\hat \Div (U/ \OO_K) \rightarrow \hat \Div (U_{K_v} / K_v)$ which sends
strongly nef adelic divisors to strongly nef metrised divisors where $U_{K_v} = U \times_{\spec K} \spec K_v$.
We also have the measure $c_1 (\overline L_1)_v \cdots c_1 (\overline L_d)_v$ defined over $U^{\an,v}$ as the weak limit
of the measures $c_1 (\overline \sL_{1} )_v \cdots c_1 (\overline \sL_d)_v$ for every $v \in \cM(K)$.

If $\overline D$ is an adelic divisor over $U$ which is the limit of $\overline \sD_i$ then for every $v \in
\cM(K)$, the Green functions $G_{\overline \sD_i , v}$ converge locally uniformly towards $G_{\overline D, v}$ the Green
function of $\overline D$. In particular, the height formula still holds 
\begin{equation}
  \forall p \in U(\overline K) \setminus \Supp D_{|U}, h_{\overline D} (p) = \frac{1}{\# \Gal (\overline K / K) \cdot p}
  \sum_{q \in \Gal (\overline K / K) \cdot p} \sum_v n_v G_{\overline D, v} (q).
    \label{eq:height}
\end{equation}
We say that $\overline D$ is \emph{effective} if it is the limit of a Cauchy sequence of effective model adelic divisors.

Finally, if $\overline L_1, \dots, \overline L_n$ are strongly nef adelic line bundles, then if we forget about the
metrics we get a sequence of line bundles $L_{k,i}$ and the geometric intersection numbers
\begin{equation}
  L_{1,i} \cdots L_{d,i}
  \label{<+label+>}
\end{equation}
also converge to a nonnegative number that we denote by $L_1 \cdots L_d$. In particular, we say that $L$ is \emph{big} if
$L^d > 0$, this geometric intersection number is called the \emph{degree} of $X$ with respect to $L$ and it is denoted
by $\deg_{\overline L} (X)$.

Finally we end this section by stating the arithmetic equidistribution theorem of Yuan and Zhang in a version suitable
for our needs. We say that a sequence of closed points if \emph{generic} if no subsequence is contained in a strict
closed subvariety.

\begin{thm}[\cite{yuanAdelicLineBundles2026}]\label{thm:arithmetic-equidistribution}
  Let $\overline L \in \hat \Pic (U/ \OO_K)$ be a strongly nef adelic line bundle such that $L^d > 0$. If $(p_n) \subset
  U(\overline K)$ is a generic sequence such that $h_{\overline L} (p_n) \rightarrow 0$, then for any $v \in
  \cM(K)$ the sequence of Dirac measures 
  \begin{equation}
    \delta_{n,v} = \frac{1}{\# \Gal (\overline K / K) \cdot p_n} \sum_{q \in \Gal (\overline K / K) \cdot p_n} \delta_{q,v}
    \label{<+label+>}
  \end{equation}
  weakly converges towards $c_1 (\overline L)_v$ over $U^{\an,v}$
\end{thm}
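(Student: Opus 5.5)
The plan is the variational argument of Szpiro–Ullmo–Zhang, as extended by Yuan to big line bundles, carried out on the projective models approximating $\overline L$ and then passed to the limit in the boundary topology. Throughout, $c_1(\overline L)_v$ is understood as the measure $c_1(\overline L)_v^{d}$ normalised by $L^d$ (of total mass $L^d$ by Theorem~\ref{thm:total-mass-measure}). Fix $v$ and a continuous compactly supported function $\phi$ on $U^{\an,v}$. By density (Stone–Weierstrass, or model functions when $v$ is non-archimedean), it is enough to prove
\begin{equation*}
  \liminf_n \int \phi \, \delta_{n,v} \geq \frac{1}{L^d}\int \phi \, c_1(\overline L)_v^{d}
\end{equation*}
for every such $\phi$. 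Applying this to $-\phi$ gives equality, hence weak convergence.

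\textbf{Step 1: the arithmetic self-intersection vanishes.} Since $\overline L$ is strongly nef, $h_{\overline L}\geq 0$. I will use the fundamental inequality (Zhang's theorem of successive minima, in the adelic form of Yuan–Zhang):
\begin{equation*}
  \operatorname{ess\,min}(h_{\overline L}) \geq \frac{\overline L^{d+1}}{(d+1)L^d}.
\end{equation*}
It is proved on each model $\overline \sL_i$ by the arithmetic Hilbert–Samuel theorem, and passes to the limit because heights and intersection numbers converge along Cauchy sequences. The sequence $(p_n)$ is generic with $h_{\overline L}(p_n)\to 0$, so the essential minimum is $0$, and since $\overline L^{d+1}\geq 0$ we get $\overline L^{d+1}=0$.

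\textbf{Step 2: perturbation.} For small $\epsilon>0$, let $\overline L(\epsilon\phi)$ be $\overline L$ with its $v$-adic Green function modified by $\epsilon\phi$. Because $\phi$ has compact support in $U^{\an,v}$, this is again an adelic line bundle over $U$, and
\begin{equation*}
  h_{\overline L(\epsilon\phi)}(p_n)=h_{\overline L}(p_n)+\epsilon\, n_v\int\phi\,\delta_{n,v}.
\end{equation*}
The key input is Yuan's arithmetic Siu inequality, or equivalently the differentiability of the arithmetic volume:
\begin{equation*}
  \widehat{\mathrm{vol}}\bigl(\overline L(\epsilon\phi)\bigr)\geq \overline L^{d+1}+(d+1)\epsilon\, n_v\int\phi\, c_1(\overline L)_v^{d}-O(\epsilon^2).
\end{equation*}
Combined with the volume version of the fundamental inequality, $\liminf_n h(p_n)\geq \widehat{\mathrm{vol}}/((d+1)L^d)$, and with $\overline L^{d+1}=0$ from Step 1, dividing by $\epsilon n_v$ and letting $\epsilon\to 0$ gives the desired $\liminf$ bound. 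The volume lower bound for generic sequences comes from the existence of many small sections: a small section not vanishing at $p_n$ bounds its height from below, and genericity ensures the section vanishes at only finitely many $p_n$.

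\textbf{Main obstacle.} The difficult step is establishing the Siu-type volume estimate and the volume/height inequality for adelic line bundles on the quasiprojective $U$ rather than on a projective model. My plan is as follows. On each model $\overline\sL_i$, write $\overline \sL_i(\epsilon\phi)$ as a difference of nef model bundles, with $\phi$ approximated by differences of model Green functions, and apply Yuan's projective Siu inequality there. Then control every error term by $\epsilon_i\,\overline\sD_0$ in the boundary topology: arithmetic volumes and intersection numbers are continuous along Cauchy sequences, and the measures $c_1(\overline\sL_i)_v^d$ converge weakly to $c_1(\overline L)_v^d$. The delicate point is that the $O(\epsilon^2)$ constant must be uniform in $i$. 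I will first try to bound it by mixed intersection numbers of $\overline\sL_i$ with a fixed nef model bundle dominating $\phi$, which converge as $i\to\infty$.
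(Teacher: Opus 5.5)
Your Step 1 is exactly the paper's proof. The paper invokes Yuan--Zhang's equidistribution theorem, whose hypothesis is $h_{\overline L}(p_n)\to \overline L^{d+1}/((d+1)L^d)$. It then uses the fundamental inequality along the generic sequence, together with $\overline L^{d+1}\geq 0$ from strong nefness, to get $\overline L^{d+1}=0$, so that $h_{\overline L}(p_n)\to 0$ is precisely the required smallness. It stops there.

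Your Step 2 and the ``main obstacle'' instead re-derive the cited theorem by the Szpiro--Ullmo--Zhang/Yuan variational method. The ingredients are:
\begin{itemize}
\item the perturbation $\overline L(\epsilon\phi)$;
\item the arithmetic Siu inequality, after writing $\overline L(\epsilon\phi)$ as a difference of nef bundles;
\item small sections when the arithmetic volume is positive;
\item genericity of $(p_n)$ to avoid their zero loci.
\end{itemize}
This outline is correct, and it is essentially how Yuan and Zhang prove the statement on quasi-projective $U$. They define the arithmetic volume on $\hat\Pic(U/\OO_K)$, prove its continuity in the boundary topology, and prove the Siu inequality directly for nef adelic line bundles on $U$. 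With those tools your concern about uniformity of the $O(\epsilon^2)$ term disappears: that term is a combination of mixed intersection numbers of fixed nef adelic bundles on $U$, not something to control model by model. So your route is self-contained modulo the projective theory but duplicates the hardest part of the reference, whereas the paper's is a two-line reduction to it.

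One caution on your parenthetical justification in Step 1. The fundamental inequality does not pass to the limit simply because heights converge along Cauchy sequences. The model height $h_{\overline{\mathscr L}_i}$ approximates $h_{\overline L}$ only up to an error $\epsilon_i h_{\overline{\mathscr D}_0}$, which is unbounded on $U(\overline K)$, so essential minima are not controlled by pointwise convergence. One should instead use $h_{\overline L}\geq h_{\overline{\mathscr L}_i-\epsilon_i\overline{\mathscr D}_0}$ together with the volume form of the inequality (via Siu) for this non-nef model bundle. That is again what Yuan--Zhang do, so simply citing their adelic fundamental inequality, as the paper does, is the clean option.
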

\begin{proof}
  This is the result of \cite{yuanAdelicLineBundles2026} except that we need to show that $\overline L^{d+1} = 0$ to
  apply the result of Yuan and Zhang. To do so we use the \emph{fundamental inequality} of Yuan and Zhang from
  \cite{yuanAdelicLineBundles2026} 
  \begin{equation}
    \sup_{V \subset U} \inf_{q \in V (\overline K)} h_{\overline L} (q) \geq \overline L^{d+1}
    \label{<+label+>}
  \end{equation}
  where the supremum is over open subset of $U$.
  Since the sequence $p_n$ is generic we get that $\overline L^{d+1} \leq 0$ and it is $\geq 0$ because $\overline L$ is
  strongly nef.
\end{proof}

The genericity condition is not too restrictive. From a Zariski dense sequence $(p_n)$ of closed points it is always
possible to extract a generic one. Indeed, there are countably many strict closed subvarieties defined over $\overline
K$ of $U$ since $K$ is countable, we enumerate them: $Z_1, Z_2, \cdots$, define $Z_i ' = Z_1 \cup Z_2 \cdots \cup Z_i$.
We define an extracting function $\sigma : \N \rightarrow \N$ as follows, pick $\sigma (1)$ such that $p_{\sigma
(1)} \not \in Z_1 = Z_1 '$. Suppose that $\sigma (1), \cdots, \sigma (n)$ has been constructed, we choose $\sigma
(n+1 ) > \sigma (n)$ such that $p_{\sigma (n+1)} \not \in Z_{n+1} '$ the subsequence $p_{\sigma (n)}$ is generic.

\subsection{In the geometric setting}\label{subsec:}
Suppose now that $K = k(B)$ is the function field of a projective curve $B$ over some field $k$. Yuan and Zhang have
also developped their theory of adelic line bundles and divisors in this setting. To distinguish with the arithmetic
case, we will call those \emph{geometric adelic line bundle and divisors}. We will write geometric adelic line bundle
and divisors with a tilde to distinguish them from arithmetic adelic line bundles and divisors which are written with a
bar. We write $\widetilde \Div(U/ k(B))$ and $\widetilde \Pic(U/k(B))$ for the set of geometric adelic divisors and line
bundles respectively. We briefly recall the theory.  Let $X$ be a projective variety over $K$. We write $\cM (K)$ for
the set of closed points of $B$. Each point $p \in \cM (K)$ defines an absolute value over $K$ by 
\begin{equation}
  \forall \lambda \in K, \quad \left| \lambda \right|_p = e^{- \ord_p (\lambda)}.
  \label{<+label+>}
\end{equation}
If $L$ is a $\Q$-line bundle over $X$, a \emph{model} of $L$ is the data of a model $\sX$ of $X$ over $B$ and a $\Q$-line bundle
$\widetilde \sL$ over $\sX$ such that $\widetilde \sL_{|X} = L$. It also defines a height function $h_{\widetilde \sL} : X (\overline K)
\rightarrow \R$ via 
\begin{equation}
  h_{\widetilde \sL} (p) = \Delta_p \cdot \widetilde \sL
  \label{<+label+>}
\end{equation}
where $\Delta_p$ is the closure of $p$ in $\sX$. However we can also interpret this using Berkovich spaces, indeed for
every $v \in \cM (K)$, $\widetilde \sL$ defines a metric of the line bundle $L$ over $X^{\an,v}$ and Formula
\eqref{eq:height-function} also holds. The geometric intersection number $\widetilde \sL_0 \cdots \widetilde \sL_n$ has also an expression of
the form of \eqref{eq:intersection-number}. We also have an analog for geometric model adelic divisors. In particular, if $\widetilde \sD$ is
a $\Q$-Cartier divisor over some model $\sX$ of $X$ restricting to a divisor $D$ over $X$, then for every $b \in
\cM(K)$, it induces a Green function of $D$ over $X^{\an, b}$.
Suppose now that $U$ is a quasiprojective variety over $K$, then a geometric adelic line bundle (resp. geometric adelic divisor) over $U$
is a Cauchy sequence of geometric model adelic line bundles (resp. geometric model adelic divisors) on projective models of $U$ over $B$
with respect to the boundary topology. In particular, if $\widetilde D$ is an adelic divisor over $U$ which is the limit
of $\widetilde \sD_i$, then the Green functions $G_{\widetilde D_i , b}$ converge uniformly locally over $U^{\an, b}$
towards the Green function $G_{\widetilde D, b}$ of $\widetilde D$ and the formulas \eqref{eq:height-function}, 
\eqref{eq:intersection-number} and \eqref{eq:height} also hold in the geometric setting.

\subsection{Restriction of adelic divisors and line bundles}\label{subsec:restriction-adelic-divisors}
Suppose $Y,X$ are quasiprojective varieties over a number field $K$ and let $f:Y \rightarrow X$ be a morphism, there is
a natural \emph{pullback operator} 
\begin{equation}
  f^* \hat \Pic (X/ \OO_K) \rightarrow \hat \Pic (Y / \OO_K), \quad f^* \hat \Div (X / \OO_K)\rightarrow \hat \Div
  (Y/ \OO_K).
  \label{<+label+>}
\end{equation}
We describe in one specific case that is useful for this paper. Let $X = \A^2_K$ and $C \subset X$ be an irreducible
curve. Suppose that $\overline D$ is a strongly nef adelic Green divisor over $\A^2_K$ such that $D_{|\A^2} = 0$. Write $\overline D
= \lim_i \overline \sD_i$ and let $\sX_i$ be a projective model over which $\sD_i$ is defined. Write $\sY_i$ for the
Zariski closure of $C$ in $\sX_i$, then $\sD_i$ restricts to a Cartier divisor over $\sY_i$ and the Green function
$G_{\overline D_i}$ restricts to a Green function ${G_{\overline D_i}}_{|C(\C)}$ of ${D_i}_{|\overline C}$ where
$\overline C$ is the Zariski closure of $C$ in $X_i$. In particular, the degree of $D_{|C}$ is equal to the limit 
\begin{equation}
    \deg(D_{|C}) = \lim_i \deg( {D_i}_{|\overline C}) = \lim_i D_i \cdot \overline C.
\end{equation}

We end this section with a computation of geometric intersection numbers  over a fibration of projective varieties. Using \cite[Example 20.3.3]{fultonIntersectionTheory1998}, we can state the following: If $q : X \rightarrow S$ is a flat projective morphism where $S$ is the spectrum of a discrete valuation ring and $D_1 \cdots D_n$ are Cartier divisors where $n = \dim_S X$, then 
\begin{equation}
    D_{1,\eta} \cdots D_{n,\eta} = D_{1,s} \cdots D_{n,s}
\end{equation}
where $\eta$ is the generic point of $S$ and $s$ is the closed point. From this, one can deduce the following proposition:

\begin{prop}\label{prop:intersection-number-fibration}
    Let $q :X \rightarrow Y$ be a morphism between irreducible smooth projective varieties over a field $K$ and $D_1, ..., D_n$ be $n$ Cartier divisors over $X$ and $E_1, ..., E_m$ be $m$ Cartier divisors over $Y$ with $n = \dim X - \dim Y, m = \dim Y$, then
    \begin{equation}
        D_1 \cdots D_n \cdot q^* E_1 \cdots q^* E_m = (D_{1,\eta} \cdots D_{n, \eta}) (E_1 \cdots E_m)
    \end{equation}
    where $\eta$ is the generic point of $Y$ and $D_{i,\eta}$ is the restriction of $D_i$ to the generic fiber.
\end{prop}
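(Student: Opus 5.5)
The plan is to reduce Proposition~\ref{prop:intersection-number-fibration} to the flat-fibration statement over a discrete valuation ring quoted just before it from \cite[Example 20.3.3]{fultonIntersectionTheory1998}. First, by multilinearity and the moving lemma, I will replace each $E_j$ by a difference of very ample divisors. Since both sides are multilinear in the $E_j$, I may assume every $E_j$ is very ample. Then I choose general members $E_1, \dots, E_m$ of the corresponding linear systems. By Bertini, their intersection $Y_0 := E_1 \cap \dots \cap E_m$ is a reduced zero-dimensional subscheme of $Y$. It consists of $E_1 \cdots E_m$ points, counted over $\overline K$ after base change, which is harmless for intersection numbers. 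By generic flatness there is a dense open $V \subset Y$ over which $q$ is flat, and the general points of $Y_0$ lie in $V$.

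Next I identify the left-hand side. For general choices, $q^*E_1, \dots, q^*E_m$ are the Cartier divisors $q^{-1}(E_j)$. These meet properly, and their intersection is the scheme-theoretic fibre $q^{-1}(Y_0) = \bigsqcup_{y \in Y_0} X_y$. Flatness near $Y_0$ and the reducedness of $Y_0$ give $q^*E_1 \cdots q^*E_m = \sum_{y \in Y_0} [X_y]$ as cycles on $X$. I will get this by iterating the fact that, for a Cartier divisor meeting a cycle properly, intersection is given by restriction. Hence
\[
D_1 \cdots D_n \cdot q^*E_1 \cdots q^*E_m = \sum_{y \in Y_0} D_{1,y} \cdots D_{n,y}.
\]

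It then remains to show that each fibre number $D_{1,y} \cdots D_{n,y}$ equals the generic number $D_{1,\eta} \cdots D_{n,\eta}$. For a closed point $y \in V$, I take the local ring $\OO_{Y,y}$. Its dimension may exceed one, so I pick a curve germ: a DVR $R$ dominating a curve through $y$ whose generic point maps to $\eta$ is not available unless $\dim Y = 1$. So instead I argue by induction on $\dim Y$, using chains of DVRs. Choose a closed irreducible curve $C \subset V$ through $y$ meeting $V$, and let $R$ be the local ring of its normalization at a point above $y$. Base-changing the flat family $X \times_Y \spec R$ and applying the quoted DVR statement shows that the number at $y$ equals the number at the generic point of $C$. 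I then repeat this along a chain of irreducible subvarieties $y \in C \subset C_2 \subset \dots \subset Y$, each time localizing at the generic point of the smaller one and normalizing, which gives a DVR because it has codimension one in the larger. This is where I expect the main technical nuisance. Alternatively, I can invoke the standard constancy of intersection numbers in flat families over a connected base, \cite[Prop.~10.2]{fultonIntersectionTheory1998}, which gives local constancy on $V$ and hence equality with the generic value.

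Summing over the $E_1 \cdots E_m$ points of $Y_0$ then yields the formula.
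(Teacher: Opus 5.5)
Your argument is correct and is essentially the deduction the paper has in mind. The paper only invokes the DVR constancy statement from \cite[Example 20.3.3]{fultonIntersectionTheory1998} and asserts that the proposition follows. Your reduction fills this in: make the $E_j$ very ample, use Bertini to cut $Y$ down to finitely many reduced points in the flat locus, then specialize over a DVR from each fibre to the generic one.

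One simplification is available. Since $Y$ is smooth, the $\mathfrak m_y$-adic order is a discrete valuation of $K(Y)$ whose valuation ring dominates $\OO_{Y,y}$. So a single application of the DVR statement, after a harmless extension of the residue field, replaces your chain of subvarieties. Some such DVR step is genuinely needed: \cite[Prop.~10.2]{fultonIntersectionTheory1998} by itself only compares fibres over closed points and does not reach $\eta$.
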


\begin{cor}\label{cor:intersection-number-fibration-quasiprojective}
    Let $q: X \rightarrow Y$ be a morphism between normal quasiprojective varieties over some field $K$, let $D_1, ...,
    D_n$ be strongly nef $b$-divisors over $X$ and let $E_1, ..., E_m$ be strongly nef $b$-divisors over $Y$ with $n =
    \dim X - \dim Y$ and $m = \dim Y$, then 
    \begin{equation}
        D_1 \cdots D_n \cdot q^*E_1 \cdots q^* E_m = (D_{1,\eta} \cdots D_{n,\eta} ) \cdot (E_1 \cdots E_m).
    \end{equation}
\end{cor}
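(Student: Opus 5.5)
The plan is to reduce Corollary~\ref{cor:intersection-number-fibration-quasiprojective} to the projective statement Proposition~\ref{prop:intersection-number-fibration} by approximation. Write each $D_i = \lim_k D_{i,k}$ and $E_j = \lim_k E_{j,k}$, where the $D_{i,k}$ (resp. $E_{j,k}$) are nef Cartier $b$-divisors forming Cauchy sequences for the boundary topology of $X$ (resp. $Y$). By definition, the intersection numbers of strongly nef $b$-divisors are limits of the corresponding numbers for the approximating sequences. It therefore suffices to prove the identity at each finite level $k$ and then pass to the limit on both sides.

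\textbf{Step 1 (models).} For fixed $k$, choose a projective completion $\overline Y_k$ of $Y$ on which all the $E_{j,k}$ are defined. Then choose a projective completion $\overline X_k$ of $X$ on which all the $D_{i,k}$ are defined and to which $q$ extends as a morphism $\overline q_k:\overline X_k\to \overline Y_k$; take the closure of the graph and blow up further if needed. Pullbacks of Cartier divisors by birational morphisms do not change intersection numbers, and the projection formula handles the composite maps. So we may pass to resolutions and assume that $\overline X_k$ and $\overline Y_k$ are smooth in characteristic $0$. In general one would instead check that the proof of Proposition~\ref{prop:intersection-number-fibration} only uses flatness over a dense open set together with \cite[Example 20.3.3]{fultonIntersectionTheory1998}, which works for normal varieties.

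\textbf{Step 2 (finite level).} Since $q^*E_{j,k}$ is represented by $\overline q_k^*E_{j,k}$, Proposition~\ref{prop:intersection-number-fibration} gives
\[
D_{1,k}\cdots D_{n,k}\cdot q^*E_{1,k}\cdots q^*E_{m,k}=(D_{1,k,\eta}\cdots D_{n,k,\eta})(E_{1,k}\cdots E_{m,k}).
\]

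\textbf{Step 3 (limit).} The right factor $E_{1,k}\cdots E_{m,k}$ tends to $E_1\cdots E_m$ by definition. On the left we must check two things.
\begin{enumerate}
\item The sequence $q^*E_{j,k}$ is Cauchy and nef, with limit $q^*E_j$. Fix a boundary divisor $D_0$ of $X$ such that $q^*F_0\le C D_0$ for a boundary divisor $F_0$ of $Y$; this is possible because $q^{-1}(\overline Y\setminus Y)\subset \overline X\setminus X$ on suitable models. The bounds $-\epsilon F_0\le E_{j,k'}-E_{j,k}\le \epsilon F_0$ then pull back to bounds of the same form with $C\epsilon D_0$.
\item The restrictions $D_{i,k,\eta}$ are nef Cartier $b$-divisors on the generic fiber $X_\eta$, and they are Cauchy with respect to the boundary divisor $D_{0,\eta}$. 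Hence $D_{i,\eta}$ is strongly nef and its intersection numbers are the limits.
\end{enumerate}
With both points in hand, we take $k\to\infty$ in the finite-level identity.

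The main obstacle is Step 3: making the boundary-topology bookkeeping compatible across $q$, and in particular showing that the boundary divisor restricts to a boundary divisor of the generic fiber, so that $D_{i,\eta}$ is well defined. If $X_\eta$ is not proper over $\eta$, then $D_{0,\eta}$ has support exactly $\overline X_\eta\setminus X_\eta$; if $X_\eta$ is proper, the estimate gives actual equality on $X_\eta$. Either way the monotonicity inequality $E_1\cdots E_n\ge E_1'\cdots E_n'$ for nef divisors squeezes the limits, since the differences are bounded by $\epsilon_k$ times intersection numbers involving $D_0$, which are uniformly bounded.
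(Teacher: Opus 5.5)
Your proposal is correct and follows the same route as the paper. You approximate the $D_i$ and $E_j$ by nef Cartier divisors on completions to which $q$ extends, apply Proposition~\ref{prop:intersection-number-fibration} at each finite level, and pass to the limit; the only difference is that you also justify points the paper leaves implicit (reduction to smooth models, and the fact that pullback by $q$ and restriction to the generic fiber preserve Cauchy sequences for the boundary topology).
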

\begin{proof}
    Let $D_{i,k}, E_{i,k}$ be nef model divisors converging to $D_i$ and $E_i$ respectively. We can assume that every
    $D_{i,k}$ is defined over the completion $X_k$ of $X$ and every $E_{i,k}$ defined over the same completion $Y_k$ and such that the morphism $q$ extends to a morphism $q:X_k \rightarrow Y_k$. Then, we have that 
    \begin{equation}
        D_1 \cdots D_n \cdot q^* E_1 \cdots q^* E_n = \lim_k D_{1,k} \cdots D_{n,k} \cdot q^* E_{1,k} \cdots q^* E_{n,k}.
    \end{equation}
    Now the term in the limit is equal by Proposition \ref{prop:intersection-number-fibration} to 
    \begin{equation}
        (D_{1,k,\eta} \cdots D_{n, k, \eta}) (E_{1,k} \cdots E_{m.k}) \xrightarrow[k \rightarrow + \infty]{} (D_{1,\eta} \cdots D_{n,\eta}) \cdot {E_1 \cdots E_m}.
    \end{equation}
\end{proof}

Another corollary of Fulton's result is the following:

\begin{prop} \label{prop:independent-intersection-number-fiber}
Let $q : X \rightarrow Y$ be a projective flat morphism of varieties, if $D_1, ..., D_e$ are Cartier divisors over $X$ where $e = \dim X  - \dim Y$, then for every $y \in Y$, the intersection number $D_{1,y} \cdots D_{e,y}$ of the Cartier divisors restricted to the fibery of $y$ is independent of $y$.
\end{prop}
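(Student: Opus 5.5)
The plan is to reduce to the local statement quoted just before Proposition~\ref{prop:intersection-number-fibration}: if $S$ is the spectrum of a discrete valuation ring and $X \rightarrow S$ is flat and projective, then intersection numbers of Cartier divisors on the generic fibre equal those on the special fibre \cite[Example 20.3.3]{fultonIntersectionTheory1998}. The approach is to show that the function $y \mapsto D_{1,y}\cdots D_{e,y}$ is constant along every specialization $y' \rightsquigarrow y$ of points of $Y$, and then use the irreducibility of $Y$.

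First, we make sense of the number at a non-closed point. For any point $y \in Y$ (scheme-theoretic, not necessarily closed), the fibre $X_y$ is a projective scheme over the residue field $\kappa(y)$. By flatness it has pure dimension $e = \dim X - \dim Y$. The restrictions $D_{i,y}$ are Cartier divisors on $X_y$, so $D_{1,y}\cdots D_{e,y}$ is a well-defined integer. It is also invariant under base change to $\overline{\kappa(y)}$, so closed points and geometric points give the same value.

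Next, we show that specializations preserve the number. Let $y'$ be a point specializing to $y$, i.e.\ $y \in \overline{\{y'\}}$. Choose a discrete valuation ring $R$ and a morphism $\spec R \rightarrow Y$ sending the generic point to $y'$ and the closed point to $y$. Such an $R$ exists by the standard fact that any specialization in a Noetherian scheme is realized by a DVR, obtained by normalizing a blow-up of the local ring of $\overline{\{y'\}}$ at $y$. The residue field extensions this introduces are harmless by the base-change invariance above. The base change $X_R = X \times_Y \spec R \rightarrow \spec R$ is flat and projective, and the pullbacks of the $D_i$ are Cartier divisors on $X_R$. Applying Fulton's example gives $D_{1,y'}\cdots D_{e,y'} = D_{1,y}\cdots D_{e,y}$, after identifying the generic and special fibres of $X_R$ with base changes of $X_{y'}$ and $X_y$.

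Finally, since $Y$ is a variety, hence irreducible, every point $y$ is a specialization of the generic point $\eta$. Therefore $D_{1,y}\cdots D_{e,y} = D_{1,\eta}\cdots D_{e,\eta}$ for every $y$, and the number is independent of $y$. If $Y$ were only connected, one would chain specializations through the generic points of the components, but that is not needed here. The main obstacle is the DVR step: producing $R$ with the right generic and closed points, and checking that intersection numbers are compatible with the field extensions $\kappa(y') \subset \operatorname{Frac}(R)$ and $\kappa(y) \subset$ residue field of $R$. I would handle this with the flat base-change invariance of intersection numbers \cite{fultonIntersectionTheory1998}. Alternatively, one can avoid DVRs by citing Fulton's Proposition 10.2 on the local constancy of intersection numbers in flat families directly.
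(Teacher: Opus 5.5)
Your proposal is correct and follows the route the paper intends: the paper states this proposition as an immediate corollary of Fulton's result for flat projective families over a discrete valuation ring \cite[Example 20.3.3]{fultonIntersectionTheory1998} and gives no further argument. Your steps supply exactly the details that deduction needs, namely realizing the specialization from the generic point of $Y$ to $y$ by a DVR, base-changing the family, and using the invariance of intersection numbers under field extension.
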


\section{Invariant adelic divisor}\label{sec:invariant-adelic-divisor}
Let $f : \A^2 \rightarrow \A^2$ be a regular automorphism defined over any field $L$ of degree $\lambda(f)$. It follows from
\cite{boucksomDegreeGrowthMeromorphic2008, cantatDynamiqueAutomorphismesSurfaces2001,
favreDynamicalCompactificationsMathbf2011,abboudDynamicsEndomorphismsAffine2023} that there exist two
invariant $b$-divisors $\theta^\pm_{f,L} \in \Div_b(\A^2_L / L)$ unique up to multiplication by a positive constant such that 
\begin{equation}
  f^* \theta^+_{f,L} = \lambda (f) \theta^+_{f,L}, \quad (f^{-1})^* \theta^{-1}_{f,L} = \lambda(f) \theta^-_{f,L}.
  \label{<+label+>}
\end{equation}
We fix the normalisation such that $\theta^\pm_{f, L, \P^2_L} = L_\infty$ is the line at infinity. This implies the following on intersection numbers 
\begin{equation}
    (\theta^+_{f,L})^2 = (\theta^-_{f,L})^2 = 0 \text{ and } \theta^+_{f,L} \cdot \theta^-_{f,L} = 1.
\end{equation}
We emphasize that this
construction works over any field. Now if $L$ is a complete metrised field, we explain that this is compatible with the
theory of Yuan and Zhang and we can construct canonical Green functions associated to these $b$-divisors to define 
invariant adelic divisors.
\begin{rmq}\label{rmq:}
  Here there is a slight abuse because the topology used in the references mentioned above on Cartier $b$-divisors is
  different from that the one we use here. Call the topology in those references the \emph{weak topology}, the first author
  showed in \cite[Proposition 9.6]{abboudRigidityPeriodicPoints2024} that the boundary topology is stronger than the
  weak topology and that the limiting process used to define $\theta^\pm_{f,L}$ also works with the boundary topology. Namely if $L_\infty$ is the line at infinity in $\P^2_L$ then 
  \begin{equation}
      \frac{1}{\lambda(f)^n} (f^{\pm n})^* L_\infty \xrightarrow[n \rightarrow + \infty]{} \theta_{f,L}^\pm
  \end{equation}
  both with respect to the weak topology and for the boundary topology.
\end{rmq}
\subsection{Green functions and equilibrium measure over a complete metrised field}\label{subsec:}
Let $K_v$ be a complete metrised field.
The construction of the Green functions of a regular plane polynomial automorphism $f$ over the field of complex numbers can be found in \cite{bedfordPolynomialDiffeomorphismsC21991a,HUBBARD1986101}, and over any complete algebraically closed field with a non-trivial
absolute value in \cite{zbMATH06226668}. In \cite{abboudRigidityPeriodicPoints2024}, the first author showed that these constructions are
compatible with the theory of adelic divisors over quasiprojective varieties of Yuan and Zhang.

There is a unique (up to multiplication by a positive constant) Green function $G^\pm_f$ of $\theta_{f,K_v}^{\pm}$ such that
$G^{\pm}_f \circ f^{\pm 1} = \lambda(f)
G^{\pm}_f$. It is defined as follows 
\begin{equation}
  \forall q \in \A^{2, \an}_{K_v}, \quad G^{\pm}_f (q) = \lim_n \frac{1}{\lambda(f)^n} \log^+ \parallel f^{\pm n} (q) \parallel
  \label{<+label+>}
\end{equation}
where $\parallel (x,y) \parallel = \max \left( \left| x \right|, \left| y \right| \right)$. We have the
following properties which can be recovered from \cite{bedfordPolynomialDiffeomorphismsC21991a,zbMATH06226668,zbMATH06288416}. 
\begin{enumerate}
  \item $G_f^{\pm}$ is continuous, $\geq 0$, plurisubharmonic and harmonic over $\left\{ G^{\pm}_f > 0 \right\}$.
  \item There exists an open neighbourhood $W^-$ of $p_-$ in $\P^{2, \an}_{K_v}$ invariant by $f^{-1}$ and such that the
    sequence of functions 
    \begin{equation}
      p \mapsto \frac{1}{\lambda(f)^n} \log^+ \left| \left| f^n (p) \right| \right| - \log^+ \left| \left| p
      \right| \right|
      \label{<+label+>}
    \end{equation}
    is a sequence of continuous functions over $\P^{2, \an}_{K_v} \setminus W^-$ that converges uniformly towards $G^+_f - \log^+
    \left| \left| \cdot \right| \right|$ over $\P^{2, \an}_{K_v} \setminus W^-$.
  \item $G_f^{\pm} \circ f^{\pm 1} = \lambda(f) G^{\pm}_f$.
\end{enumerate}
We write $\overline \theta^\pm_{f, K_v}$ for the induced metrised divisors. We also define $\overline \theta_{f,K_v} =
\overline \theta^+_{f, K_v} + \overline \theta^-_{f, K_v}$ and set $G_f =G^+_f + G^-_f$.
\begin{cor}\label{cor:positive-degree-curve}
  Let $C \subset \A^2_{K_v}$ be a curve that intersects the line at infinity only at $p_+$, then ${G^+_f}_{|C}$ is a
  Green function of ${L_\infty}_{|C}$ where $L_\infty$ is the line at infinity in $\P^2$. In particular, $\overline
  \theta^+_{f, K_v}$ restricts to $\overline C$ to a metrised divisor $\overline D$ where $D = {L_{\infty}}_{|\overline
  C}$.
\end{cor}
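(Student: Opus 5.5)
The plan is to show that the function $G^+_f - \log^+\|\cdot\|$, restricted to $C$, extends continuously across the unique point of $\overline C$ at infinity, namely $p_+$. Once this is in hand the corollary follows at once: $\log^+\|\cdot\|$ restricted to $C$ is a Green function of ${L_\infty}_{|\overline C}$, because near any point of $L_\infty$ a local equation of $L_\infty$ is $1/x$ or $1/y$, so $\log^+\|\cdot\| + \log|\xi|$ is continuous there. A Green function plus a function that is continuous on all of $\overline C^{\an}$ is again a Green function of the same divisor.

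The key input is property (2) listed above. There is an open neighbourhood $W^-$ of $p_-$ in $\P^{2,\an}_{K_v}$ such that $G^+_f - \log^+\|\cdot\|$ extends to a continuous function on $\P^{2,\an}_{K_v}\setminus W^-$, being a uniform limit of continuous functions there. Since $p_+\neq p_-$ (regularity), we may shrink $W^-$ and assume $p_+\notin \overline{W^-}$. The set $\overline C\cap L_\infty$ is exactly $\{p_+\}$ by hypothesis, and $\overline C^{\an}\cap \overline{W^-}$ is a closed set meeting $L_\infty$ nowhere, so it is contained in $\A^{2,\an}$. There both $G^+_f$ and $\log^+\|\cdot\|$ are already continuous.

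It follows that $G^+_f - \log^+\|\cdot\|$ is continuous on all of $\overline C^{\an}$: away from $W^-$ by property (2), and on $W^-\cap C$ because both terms are continuous on the affine part. Restricting this function to the curve preserves continuity. Hence ${G^+_f}_{|C}$ is a Green function of ${L_\infty}_{|\overline C}$.

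For the statement about $\overline\theta^+_{f,K_v}$, I would use the restriction procedure of Section~\ref{subsec:restriction-adelic-divisors}. Write $\theta^+_{f,K_v}$ as a limit of $\lambda^{-n}(f^n)^*L_\infty$. Its incarnation on $\P^2$ is $L_\infty$, so the degree restricted to $\overline C$ is computed by the Zariski closure of $C$ in models dominating $\P^2$. Because $C$ only meets infinity at $p_+$, the restricted $b$-divisor on the curve is determined by its trace near $p_+$. On a curve, all completions agree with the normalization, so the restricted divisor is simply $D={L_\infty}_{|\overline C}$, and its Green function is ${G^+_f}_{|C}$ by the continuity shown above.

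The main obstacle is this last identification. One must check that the restrictions of the approximating divisors $\lambda^{-n}(f^n)^*L_\infty$ to $\overline C$ really converge to ${L_\infty}_{|\overline C}$, and not to a different multiple concentrated at $p_+$. I would handle it using the Green function itself. The Green function determines the divisor on a normal curve via the order of its logarithmic singularity at each point at infinity. Since ${G^+_f}_{|C}-\log^+\|\cdot\|$ is bounded near $p_+$, the orders coincide with those of $L_\infty$.
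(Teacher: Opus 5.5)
Your argument is the one the paper intends. The paper gives no separate proof: the corollary is read off from property (2) together with the fact that $\log^+\|\cdot\|$ is a Green function of $L_\infty$. Your log-order identification of the restricted $b$-divisor correctly makes the ``in particular'' part precise.

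The one step to tighten is ``we may shrink $W^-$''. Shrinking $W^-$ enlarges $\P^{2,\an}_{K_v}\setminus W^-$, so property (2) as stated does not license it. What you need, and what the paper also uses tacitly, is that the convergence is uniform on a neighbourhood of $p_+$. This holds in the standard filtration construction. It also follows directly: on an $f$-invariant neighbourhood $N$ of $p_+$ avoiding the indeterminacy point of $f$, one has
\[
\lambda(f)^{-n}\log^+\|f^n\|-\log^+\|\cdot\|=\sum_{k<n}\lambda(f)^{-k}\,u_1\circ f^k,
\qquad u_1=\lambda(f)^{-1}\log^+\|f\|-\log^+\|\cdot\|,
\]
and $u_1$ is bounded and continuous on $N$, so the series converges uniformly there.
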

From this corollary and following \cite{dujardinDynamicalManinMumford2017} we deduce the following result. 
\begin{prop}\label{prop:restriction-to-curve-has-positive-degree}
  Let $L$ be a finitely generated field over $\Q$ and $f \in \Aut(\A^2_L)$ a regular automorphism defined over $L$.
  Let $C \subset \A^2_L$ be a curve, then $\theta^\pm_{f,L}$ restricts to a $b$-divisor ${\theta^\pm_{f,L}}_{|C}$ and we
  have $\deg ({\theta_{f,L}^\pm}_{|C}) >0$.
\end{prop}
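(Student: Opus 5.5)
The plan is to interpret $\deg({\theta^\pm_{f,L}}_{|C})$ as the total mass of a Laplacian at one archimedean place, and then show that this Laplacian cannot vanish. By symmetry (replace $f$ by $f^{-1}$, which swaps $\theta^+$ and $\theta^-$) I only treat $\theta^+$.

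\textbf{Step 1: the restriction exists and its degree is a limit.} By Remark~\ref{rmq:}, $\theta^+_{f,L}$ is the boundary-topology limit of the nef Cartier $b$-divisors $\lambda(f)^{-n}(f^n)^*L_\infty$. Following Section~\ref{subsec:restriction-adelic-divisors}, each of these restricts to the closure of $C$ in a completion where it is defined, and the restrictions form a Cauchy sequence of nef divisors on completions of $C$. This gives the $b$-divisor ${\theta^+_{f,L}}_{|C}$, with
\begin{equation*}
\deg({\theta^+_{f,L}}_{|C})=\lim_n \lambda(f)^{-n}\,(f^n)^*L_\infty\cdot\overline C\ \ge 0 .
\end{equation*}
Since $f^n$ restricted to $C$ is birational onto $f^n(C)$, the $n$-th term equals $\lambda(f)^{-n}\deg \overline{f^n(C)}$. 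In particular the degree is nonnegative, and it satisfies $\deg({\theta^+}_{|f(C)})=\lambda(f)\deg({\theta^+}_{|C})$. So positivity may be checked for any iterate $f^k(C)$.

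\textbf{Step 2: pass to a complex place and use Green functions.} $L$ is finitely generated over $\Q$, so I embed $L\hookrightarrow\C$; the degree is insensitive to this base change. Over $\C$, the metrised divisor $\overline\theta^+_{f,\C}$ has Green function $G^+_f$. Its restriction to $C$ is a strongly nef metrised divisor on $C$, so Theorem~\ref{thm:total-mass-measure} gives
\begin{equation*}
\deg({\theta^+}_{|C})=\int_{\tilde C(\C)} dd^c\big({G^+_f}_{|C}\big),
\end{equation*}
where $\tilde C$ is the normalization of $C$. Suppose this mass is $0$. Then $G^+_f\circ\nu$ is a continuous, nonnegative, harmonic function on the punctured compact Riemann surface $\tilde C$. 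Moreover, at each end its logarithmic singularity has coefficient equal to the local contribution to the limiting divisor, and these coefficients are all $\ge 0$ (effective divisors) and sum to $0$. Hence $G^+_f\circ\nu$ is bounded near the punctures, extends harmonically to the compactification, and is therefore a constant $c\ge 0$.

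\textbf{Step 3: rule out $G^+_f|_C\equiv c$.}
\begin{itemize}
\item[(a)] If $c=0$, then $C\subset K^+$. Since $C$ is unbounded, $\overline C$ meets $L_\infty$. By the filtration property (item (2) near $p_+$, i.e.\ outside $W^-$), $G^+_f=\log^+\|\cdot\|+O(1)$ near $L_\infty\setminus\{p_-\}$. Hence $\overline C$ can meet infinity only at $p_-$. Replacing $C$ by $f^{-k}(C)$ does not help here, so instead I use that $K^+$ near $p_-$ is controlled by $W^-$: the branches of $C$ at $p_-$ lie in $W^-$, so $f^{-n}$ of them escape. This contradicts the invariance $G^+_f(f^{-n}C)=0$ together with $K^+\cap W^-$ being bounded in the appropriate coordinate.
\item[(b)] If $c>0$, then $C$ lies in a level set $\{G^+_f=c\}$. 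Applying $f^{-n}$, the curve $f^{-n}(C)$ lies in $\{G^+_f=c\lambda^{-n}\}$. Each such curve is an algebraic curve contained in a leaf of the Bedford--Smillie lamination of $\{G^+_f>0\}$ by level sets of the Böttcher coordinate $\varphi^+$. These leaves are injectively immersed copies of $\C$ that accumulate on all of $J^+$, so they cannot be closed algebraic curves. This is exactly the rigidity input of Dujardin--Favre~\cite{dujardinDynamicalManinMumford2017}, which I would invoke here.
\end{itemize}

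\textbf{Expected obstacle.} I expect Step 3 to be the main obstacle, specifically excluding an algebraic curve contained in $K^+$ or in a level set of $G^+_f$. This is where one must import \cite{dujardinDynamicalManinMumford2017} (equivalently, the non-algebraicity of the leaves and of $K^+$ due to Bedford--Smillie). An alternative route I would try first is to reduce via Step 1 to an iterate $f^k(C)$ meeting $L_\infty$ only at $p_+$. Corollary~\ref{cor:positive-degree-curve} then gives $\deg = \deg {L_\infty}_{|\overline{f^k C}}>0$ directly. The difficulty there is controlling branches of $C$ at the indeterminacy point, which is the same issue in another form.
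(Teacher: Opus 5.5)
\textbf{There is a genuine gap in Step 3.} Your Steps 1 and 2 are fine, and the relation $\deg({\theta^+_{f}}_{|f(C)})=\lambda(f)\deg({\theta^+_{f}}_{|C})$ is useful. But Step 3, which carries all the content, does not go through.

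In (a), the claim that $K^+\cap W^-$ is bounded is false. $K^+$ is unbounded, its closure in $\P^2$ is $K^+\cup\{p_-\}$, and it is totally invariant. Moreover, $K^+\setminus K$ consists precisely of points whose backward orbits escape to $p_-$. So the fact that $f^{-n}$ pushes the branches of $C$ at $p_-$ to infinity is perfectly compatible with ${G^+_f}_{|f^{-n}(C)}\equiv 0$.

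In (b), a leaf contained in $\{G^+_f=c\}$ with $c>0$ cannot accumulate on $J^+\subset\{G^+_f=0\}$, because that level set is closed in $\C^2$. Also, the Dujardin--Favre rigidity available here (Theorem~\ref{thm:jacobian-1}) concerns $G^+-\alpha G^-$ with $\alpha>0$, not $G^+$ alone. So nothing in the proposal rules out ${G^+_f}_{|C}$ being constant, and positivity is not proved.

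\textbf{What is missing, and how to supply it.} The missing ingredient is control of the branches of $C$ at $p_-$. You identify this correctly in your ``alternative route'' but leave it open.

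The paper supplies it by citing \cite[Proposition 4.2]{bedfordPolynomialDiffeomorphismsC21991}: some iterate $f^k(C)$ meets $L_\infty$ only at $p_+$. Corollary~\ref{cor:positive-degree-curve} then gives $\deg({\theta^+_f}_{|f^k(C)})=L_\infty\cdot\overline{f^k(C)}>0$. Finally, $G^+_f\circ f^k=\lambda(f)^kG^+_f$ transports positivity back to $C$.

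Alternatively, your own Step 2 already closes the gap without any case split $c=0$ versus $c>0$:
\begin{enumerate}
\item If the degree vanishes, then $\deg({\theta^+_f}_{|f^n(C)})=\lambda(f)^n\deg({\theta^+_f}_{|C})=0$ for all $n\ge 0$.
\item Near every point of $L_\infty\setminus\{p_-\}$ you have $G^+_f=\log^+\|\cdot\|+O(1)$, so a branch there contributes a local coefficient $\geq 1$. All local coefficients are $\ge 0$, hence every $\overline{f^n(C)}$ meets $L_\infty$ only at $p_-$.
\item Apply Corollary~\ref{cor:positive-degree-curve} to $f^{-1}$ and use $f^*\theta^-_f=\lambda(f)^{-1}\theta^-_f$. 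This gives $\deg\overline{f^n(C)}=\deg({\theta^-_f}_{|f^n(C)})=\lambda(f)^{-n}\deg\overline{C}\to 0$, which is absurd for degrees of curves.
\end{enumerate}
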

\begin{proof}
  Since $L$ is finitely generated over $\Q$, we can embed $L$ into the field $\C$ of complex numbers. Since the degree
  is defined as the geometric intersection number it is invariant under base change so we can compute over $\C$.
  Consider the canonical Green functions $f$ over $\C$ associated to $\theta_{f,\C}^\pm$ and consider the extension of $f$ to $\P^2$.
  By \cite[Proposition 4.2]{bedfordPolynomialDiffeomorphismsC21991}, there exists an integer $k \geq 0$ such that
  $f^k(C)$ intersects the line at infinity only at $p^+$. By Corollary \ref{cor:positive-degree-curve} we have that
  \begin{equation}
    \deg \left( {\theta_{f,\C}^+}_{|f^k (C)} \right) = L_\infty \cdot \overline{f^k (C)} > 0
  \end{equation}
  where $\overline{f^k (C)}$ is the closure of $f^k (C)$ in $\P^2$.
Now we know by Theorem
\ref{thm:total-mass-measure} that this degree is equal to the total mass of $(dd^c G^+_f)_{|f^k(C)}$. Since $f^k$ induces
an isomorphism between $C$ and $f^k (C)$ we deduce also that $(dd^c G^+_f)_{|C}$ has positive mass which is the degree $\deg
(D^+_{|C})$.
\end{proof}
The \emph{equilibrium measure} of $f$ is defined as 
\begin{equation}
  \mu_f = \frac{1}{2}(dd^c G_f)^2 =  dd^c G^+_f \wedge dd^c G^-_f.
  \label{<+label+>}
\end{equation}
It is clear that $\Supp \mu_f \subset \left\{ G_f = 0 \right\}$ and we have the following characterisation which was
shown first by Dujardin and Favre in \cite{dujardinDynamicalManinMumford2017} and generalised to any
affine surface by the first author in \cite{abboudRigidityPeriodicPoints2024}. We emphasise that this statement holds
over any complete metrised field $K_v$.
\begin{prop}\label{prop:polynomial-convex-hull}
  The compact set $\left\{ G_f = 0 \right\}$ is the polynomial convex hull of $\Supp \mu_f$.
\end{prop}

We end this section by stating three rigidity results. Let $f$ and $g$ be regular  polynomial automorphisms of $\C^2$.

\begin{thm}[\cite{dujardinDynamicalManinMumford2017}]\label{thm:jacobian-1}
  Suppose there exists a curve $C \subset \C^2$ such that $(G^+ - \alpha
  G^-)_{|C}$ is harmonic for some $\alpha > 0$, then $\left| \Jac (f) \right| = 1$.
\end{thm}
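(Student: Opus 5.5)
The plan is to turn the harmonicity hypothesis into an identity between the slices of the Green currents $T^\pm = dd^c G^\pm_f$ along $C$, propagate it with the dynamics, and then read off $\left|\Jac(f)\right|$ from mass and Lyapunov-exponent considerations. Work on the normalisation $\widetilde C$ of $C$, a finite-type Riemann surface. Write $u^\pm = G^\pm_f|_{C}$, which are continuous and subharmonic. The hypothesis says that the positive measures
\[
\nu^+ = dd^c u^+ \quad\text{and}\quad \nu^- = dd^c u^-
\]
satisfy $\nu^+ = \alpha \nu^-$. By Proposition~\ref{prop:restriction-to-curve-has-positive-degree} and Theorem~\ref{thm:total-mass-measure} both measures have positive finite mass. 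Moreover $\Supp \nu^\pm \subset \partial\{u^\pm = 0\}$, because $u^\pm$ is harmonic where it is positive and constant on the interior of its zero set. Hence $\nu := \nu^+$ is carried by $C \cap J^+ \cap J^-$, so in particular $u^+ = u^- = 0$ on $\Supp \nu$.

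Next, use the functional equations $G^+\circ f = \lambda G^+$ and $G^-\circ f = \lambda^{-1} G^-$ to transport the hypothesis. On $f^n(C)$ the function $G^+ - \alpha\lambda^{2n} G^-$ is harmonic, and this holds for all $n \in \mathbf{Z}$. Combining this with Bedford--Smillie type convergence, namely that suitably renormalised currents of integration on $f^{\pm n}(C)$ converge to $T^\mp$ (after using Proposition~\ref{prop:polynomial-convex-hull} to control where the mass lives), I expect to get the following local picture. Near $\mu_f$-generic points, the stable and unstable "slices" $T^+|_{\text{unstable discs}}$ and $T^-|_{\text{stable discs}}$ are related by a holonomy that is conformal and respects these measures up to the constant factors $\alpha\lambda^{2n}$.

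The final step compares Lyapunov exponents. For $\mu_f$-a.e.\ point we have $\chi^+ + \chi^- = \log\left|\Jac(f)\right|$. Meanwhile, the rescaling factors of the slice measures under $f$ are governed by $\lambda$ on both sides: $f$ multiplies $T^+$-slices by $\lambda$ and divides $T^-$-slices by $\lambda$. The conformal correspondence extracted from $C$ should force the expansion of $f$ along unstable discs to match the contraction along stable discs, i.e.\ $\chi^+ = -\chi^-$, and hence $\left|\Jac(f)\right| = 1$. A simpler alternative would be a volume or mass argument: if $\left|\Jac(f)\right| < 1$, pushing the relation $\nu^+ = \alpha\nu^-$ forward along $f^n(C)$ while keeping the masses of $dd^c G^\pm|_{f^n(C)}$ bounded via Theorem~\ref{thm:total-mass-measure} should lead to a contradiction with $K^+$ having zero Lebesgue measure while $K^-$ does not.

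The main obstacle is the middle step: rigorously passing from the single curve $C$ to a statement about the laminar structure of $T^\pm$ near $\Supp\mu_f$. This is the genuinely deep part of Dujardin--Favre's argument. There I would rely on the laminarity of $T^\pm$ (Bedford--Lyubich--Smillie) together with the harmonicity to show that the conformal holonomy preserves the slices.
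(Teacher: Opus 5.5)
The paper does not prove this statement; it quotes it from Dujardin--Favre. So the only question is whether your outline stands on its own. It does not: the step that carries all the content is missing.

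The steps you actually carry out contain no information about $\Jac(f)$. The identities $dd^c(G^+|_{f^n(C)})=\alpha\lambda^{2n}\,dd^c(G^-|_{f^n(C)})$, $n\in\mathbf{Z}$, are just the case $n=0$ pushed forward by the biholomorphism $f^n\colon C\to f^n(C)$. The corresponding masses are $\lambda^{n}m^+$ and $\lambda^{-n}m^-$, so they are not bounded. They are automatically compatible with the factor $\alpha\lambda^{2n}$ because $m^+=\alpha m^-$. Hence no mass count can give a contradiction, and this disposes of your ``simpler alternative''. That alternative also has the facts reversed. When $|\Jac(f)|<1$ it is $K^-$ that is Lebesgue-null, since $f^{-1}$ expands volume, while $K^+$ may or may not have positive measure. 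In any case, the Lebesgue measure of $K^\pm\subset\C^2$ does not see measures carried by the curve $C$.

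Everything therefore rests on the passage from $\nu$ to the laminar structure and on the final comparison of exponents. You only announce these (``I expect'', ``should force''). Three things are missing.

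\emph{Genericity.} $\nu$-typical points of $C$ are not $\mu_f$-typical, so Pesin theory does not apply to them. You would need laminarity of $T^\pm$, together with an identification of $T^\pm\wedge[C]$ with a geometric intersection, to know that $\nu$-almost every point lies on Pesin stable and unstable manifolds transverse to $C$.

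\emph{Holonomy.} There is no natural holonomy between unstable discs and stable discs. The only link is the composite through $C$: stable holonomy from an unstable disc to $C$, then unstable holonomy from $C$ to a stable disc. Such holonomies are in general only quasiconformal and H\"older by the $\lambda$-lemma, not conformal. They therefore need not preserve the scaling exponents you want to compare, and nothing in the hypothesis makes them conformal.

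\emph{The compared quantity.} You never say which invariant is being compared. What is needed is an invariant of the single measure $\nu$ that can be computed from both sides and detects the exponents. One candidate is its local dimension. Heuristically this equals $\log\lambda/\chi^+$ when read from $T^+$: a disc in $C$ of radius $e^{-n\chi^+}$ is stretched by $f^n$ to unit size, so it has $\nu$-mass about $\lambda^{-n}$. Read from $T^-$ via backward iterates, it equals $\log\lambda/|\chi^-|$. Proportionality would then force $\chi^+=-\chi^-$, and $\chi^++\chi^-=\log|\Jac(f)|$ concludes.

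Making this rigorous requires transversality $\nu$-almost everywhere, exact dimensionality, and enough regularity of the holonomies. That is the actual theorem, and your proposal leaves it open.
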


\begin{thm}[\cite{dujardinDynamicalManinMumford2017}]\label{thm:dissipativecommoniterate}
    Suppose $f$ is dissipative. If $f$ and $g$ have infinitely many common periodic points, then they share a common iterate.
\end{thm}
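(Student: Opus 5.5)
The natural approach is the arithmetic-equidistribution strategy used elsewhere in the paper, specialised to a single pair of automorphisms over $\C$. First I will reduce to the case where $f$ and $g$ are defined over a finitely generated field $L\subset\C$ containing all coefficients. Specialisation arguments then let me assume $L=K$ is a number field; if that reduction is too delicate, I will use Yuan--Zhang over finitely generated fields, where the same formalism holds. Suppose $P=\Per(f)\cap\Per(g)$ is infinite. Periodic points of a regular automorphism have canonical height zero for the adelic divisors $\overline\theta_f=\overline\theta^+_f+\overline\theta^-_f$ and $\overline\theta_g$. They are Galois-stable and, since each period has only finitely many fixed points, they form a Zariski dense set in some one-dimensional component, or else they are dense in $\A^2$.

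First step: treat the curve case. If infinitely many common periodic points lie on a curve $C$, then $C$ is invariant by some iterates of $f$ and of $g$, because it contains infinitely many periodic points with heights zero. But a regular automorphism has no invariant curve in $\A^2$: by Proposition~\ref{prop:restriction-to-curve-has-positive-degree}, $\theta^+_f|_C$ has positive degree, and invariance would force $\lambda(f)\deg=\deg$. So I may extract a generic sequence in $\A^2$ from $P$.

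Second step: apply Theorem~\ref{thm:arithmetic-equidistribution} to $\overline\theta_f$ and to $\overline\theta_g$. Both are strongly nef with $\theta^2=2>0$ and vanish on the generic sequence, so at the archimedean place the Galois orbits equidistribute both to $\frac12 c_1(\overline\theta_f)^2=\mu_f$ and to $\mu_g$. Hence $\mu_f=\mu_g$. By Proposition~\ref{prop:polynomial-convex-hull}, $K_f=\{G_f=0\}=\{G_g=0\}=K_g$.

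Third step, the rigidity part, which I expect to be the main obstacle: deduce a common iterate from $\mu_f=\mu_g$ when $f$ is dissipative. Following Dujardin--Favre, equality of the compact sets $K_f=K_g$ should force the Green currents to agree up to scaling. The step I would use is that $G^+_f$ is the pluricomplex Green function of the forward filled set $K^+_f$, and similarly for $g$. I would compare $T^\pm_f$ with $T^\pm_g$ via the unique-current/Liouville property of the laminar currents supported on $\partial K^\pm$.

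Having shown that $K^+_f$ and $K^+_g$, or $K^+_f$ and $K^-_g$, coincide, one gets $G^+_f=\alpha G^{\pm}_g$. If instead one gets mixed identifications, such as $G^+_f-\alpha G^-_f$ being harmonic on some curve, then Theorem~\ref{thm:jacobian-1} forces $|\Jac f|=1$, contradicting dissipativity. This is where dissipativity enters: it excludes the swapped configuration. From $G^\pm_f=\alpha_\pm G^\pm_g$, the maps $f$ and $g$ preserve the same pair of Green functions up to scale, and hence the same invariant $b$-divisors. A Lamy/Jung-type structure argument then shows that the group $\langle f,g\rangle$ contains no free subgroup of loxodromics with distinct fixed points on the Picard--Manin boundary. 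Therefore it is virtually cyclic, and $f^n=g^m$ for some $n,m$. I would carry this last group-theoretic step through the action on the Bass--Serre tree of $\Aut(\A^2)$, using that $f$ and $g$ share both endpoints of their axes.
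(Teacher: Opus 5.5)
\textbf{Gap in Step~1.} Your first step has a genuine gap, and it is exactly where dissipativity has to enter. You claim that a curve $C$ carrying infinitely many height-zero periodic points must be invariant under an iterate of $f$. This is false for regular automorphisms. Take $f(x,y)=(y,y^2+c-x)$ and the involution $\sigma(x,y)=(y,x)$. Then $\sigma f\sigma=f^{-1}$, so if $z$ and $f^n(z)$ both lie on the diagonal $\Delta$, we get $f^n(z)=\sigma f^n\sigma(z)=f^{-n}(z)$, i.e.\ $f^{2n}(z)=z$. This reversible example, which is the one motivating Dujardin--Favre's conjecture, has infinitely many periodic points on $\Delta$. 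Yet $\Delta$ is not invariant, as your own degree argument shows.

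With $g=f$, your Step~1 would therefore prove a false statement. Vanishing of the height at infinitely many points of $C$ does not give invariance; this is precisely the dynamical Manin--Mumford phenomenon. The curve case must be handled for $f$ alone:
\begin{enumerate}
\item $\overline\theta^+_f$ and $\overline\theta^-_f$ restrict to $C$ with positive degree (Proposition~\ref{prop:restriction-to-curve-has-positive-degree}), and both heights vanish at periodic points.
\item Applying Theorem~\ref{thm:arithmetic-equidistribution} separately to each restriction shows that $dd^c G^+_f|_C$ and $dd^c G^-_f|_C$ are proportional. Hence $(G^+_f-\alpha G^-_f)|_C$ is harmonic for $\alpha=\deg(\theta^+_f|_C)/\deg(\theta^-_f|_C)$.
\item Theorem~\ref{thm:jacobian-1} then forces $|\Jac(f)|=1$, contradicting dissipativity.
\end{enumerate}

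\textbf{Step~3 and the reduction.} Your third step misplaces the hypothesis. Once the common periodic points are Zariski dense and equidistribution gives $\mu_f=\mu_g$, Theorem~\ref{prop:same-measure-common-iterate} (Lamy) already yields $f^N=g^M$ with no condition on the Jacobian. The detour through Green currents, laminarity and the Bass--Serre tree is unnecessary. Moreover, no curve on which $G^+_f-\alpha G^-_f$ is harmonic can arise from the equality $\mu_f=\mu_g$, so dissipativity plays no role there.

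The reduction from $\C$ to a number field is also not a formality. The specialization must preserve both the infinitude of common periodic points and the condition $|\Jac(f)|\neq 1$ at the relevant archimedean place, and your proposal leaves this open. The paper does not prove this statement; it quotes Dujardin--Favre. The natural proof built from the ingredients the paper lists (Theorems~\ref{thm:jacobian-1} and~\ref{prop:same-measure-common-iterate}) is the curve versus Zariski-dense dichotomy, with dissipativity used only in the curve case.
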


The following result comes from Stéphane Lamy's PhD thesis. The argument can be found in
\cite{dujardinDynamicalManinMumford2017} following \cite[Theorem 5.3]{zbMATH01643663}.

\begin{thm}\label{prop:same-measure-common-iterate}
  If $f, g \in \Aut (\C^2)$ are regular automorphisms such that $\mu_f = \mu_g$, then there exists $N,M \neq 0$ such
  that $f^N = g^M$.
\end{thm}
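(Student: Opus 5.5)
The plan is to turn the measure-theoretic hypothesis into a statement about a compact set invariant under the group $\Gamma=\langle f,g\rangle$. Then I would use the structure of $\Aut(\C^2)$ as an amalgamated product (Jung--van der Kulk) and its action on the Bass--Serre tree, which is the framework of Lamy's thesis. This should force $f$ and $g$ to lie in a virtually cyclic subgroup, and hence to share an iterate.

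\textbf{Step 1 (invariant compact set).} Set $K_f=\{G_f=0\}$ and $K_g=\{G_g=0\}$. By Proposition~\ref{prop:polynomial-convex-hull}, $K_f$ is the polynomial convex hull of $\Supp\mu_f$, and likewise for $g$. Since $\mu_f=\mu_g$, the supports agree, so $K_f=K_g=:K$. This set is compact, and it is invariant under $f^{\pm1}$ and $g^{\pm1}$ because $G_f\circ f^{\pm1}$ is a combination of $\lambda(f)^{\pm1}G^\pm_f$. Hence every $h\in\Gamma$ satisfies $h(K)=K$. In particular every point of $K$ has a bounded $h$-orbit, so $K\subset\{G_h=0\}$ whenever $h$ is regular, and $K\neq\emptyset$ since it carries $\mu_f$.

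\textbf{Step 2 (tree dichotomy).} Let $\Gamma$ act on the Bass--Serre tree $T$ of $\Aut(\C^2)=A*_{A\cap E}E$. Conjugates of regular (Hénon-type) automorphisms are exactly the elements acting as hyperbolic isometries, with translation length $\log\lambda$ in the appropriate normalisation. There are two cases.
\begin{enumerate}
\item The axes of $f$ and $g$ share both ends. Then $f$ and $g$ lie in the stabiliser of a common bi-infinite geodesic. As in Lamy's argument, this stabiliser is a finite extension of a cyclic group generated by a Hénon map, so $f^N=g^M$ for some $N,M\neq0$, which is the claim.
\item The axes do not share both ends. Then a ping-pong argument on $T$ produces Hénon elements $h_{n,m}=f^ng^m\in\Gamma$, for $n,m$ large, with $\lambda(h_{n,m})=\lambda(f)^n\lambda(g)^m$ and with attracting and repelling points at infinity varying with $(n,m)$.
\end{enumerate}
It therefore remains to rule out the second case.

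\textbf{Step 3 (contradiction in the non-elementary case; main obstacle).} I would try to exploit that $K\subset K_{h}$ for all $h\in\Gamma$. This should be combined with the uniform escape to infinity provided by property (2) of the Green functions: there are filtration neighbourhoods $W^\pm$ of $p_\pm$. The aim is to show that for suitable $n,m$ the regions near infinity where $h_{n,m}$ escapes cover all of $\C^2$ outside a set disjoint from $K$, which contradicts $K\neq\emptyset$.

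Concretely, after conjugating so that $f$ is regular with $p_\pm(f)$ as usual, $g$ conjugated into $f$'s chart has indeterminacy points different from $p_\pm(f)$, because the axes differ. I would then choose a large bidisc $\Delta\supset K$. The strategy is to show that the image $g^m(\Delta\setminus\text{small set})$ lands in the region $\{\|\cdot\|\gg1\}\setminus W^-(f)$, where $f^n$ escapes. The difficulty is making this compatible with $K$ being preserved.

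An alternative route, which may be cleaner and is the one I would try first, is to show that a compact set preserved by $\Gamma$ forces $\Gamma$ to be conjugate into a group where all degrees are bounded along a common axis. A Hénon map $h$ preserving $K\neq\emptyset$ has $\mu_h$ equal to the unique measure of maximal entropy. If all $h\in\Gamma$ share the same $K$, then uniqueness of the equilibrium measure as the harmonic measure of $K$ relative to pluricomplex Green functions would give $\mu_h=\mu_f$ for all Hénon $h\in\Gamma$. It would follow that the Green functions $G^+_h$ are all determined by $K$, hence that the points $p_+(h)$ coincide. That forces the axes to share ends, contradicting case (2) of Step 2.

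Justifying this last identification is, I expect, the crux of Lamy's argument and the main obstacle of the proof.
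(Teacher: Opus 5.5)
The paper does not prove this statement itself; it defers to Lamy's thesis via Dujardin--Favre. The question is therefore whether your sketch closes, and it does not. Steps~1--2 are a sensible reduction, but the whole content of the statement is in Step~3, which you leave open, and neither of your routes can be completed as written.

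Your first route only uses that $K$ is a nonempty compact set invariant under $\Gamma=\langle f,g\rangle$, i.e.\ that $K\subset\{G_h=0\}$ for every H\'enon $h\in\Gamma$. This cannot give a contradiction in the non-elementary case. Take a regular $f$ with a fixed point $p$, and set $g=\phi f\phi^{-1}$ with $\phi$ an affine map fixing $p$. Then $\{p\}$ is $\Gamma$-invariant. A relation $f^N=g^M$ forces $|N|=|M|$ by comparing degrees. It then forces $\phi$ into the centraliser of $f^N$, or into a single coset of it conjugating $f^{-N}$ to $f^N$. These sets are countable by Lamy's description of centralisers, so for generic $\phi$ there is no common iterate. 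Consequently, an escape argument showing that $K$ misses $\{G_{h_{n,m}}=0\}$ would prove something false; the ``small set'' you discard from $\Delta$ would have to contain $K$ itself. The contradiction must come from $K$ being \emph{exactly} $\{G_f=0\}=\{G_g=0\}$, i.e.\ from the full information in $\mu_f=\mu_g$.

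Your second route has two concrete defects.
\begin{itemize}
\item You apply the Bedford--Smillie characterisation ($\max(G_h^+,G_h^-)$ is the pluricomplex Green function of $\{G_h=0\}$, with Monge--Amp\`ere measure $\mu_h$) to every H\'enon $h\in\Gamma$. That requires $\{G_h=0\}=K$, but Step~1 only gives $K\subset\{G_h=0\}$. Nothing you establish gives equality for $h=f^ng^m$.
\item The implication ``the points $p_+(h)$ coincide, hence the axes share ends'' is false. The maps $f(x,y)=(y,y^2-x)$ and $g(x,y)=(y,y^3-x)$ both have $p_+=[1:0:0]$ and $p_-=[0:1:0]$. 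Yet they have no common iterate, since $2^{|n|}\neq 3^{|m|}$.
\end{itemize}
The same example refutes the claim in your first route that different axes force $p_\pm(g)\neq p_\pm(f)$. The data that can separate such maps lies in the finer asymptotics of $G^\pm$ at infinity (for instance the invariant $b$-divisors $\theta^\pm$ of \S3), not in $p_\pm$.

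A workable outline would use the equilibrium-measure characterisation only for $f$ and $g$, where it actually applies. From $\max(G_f^+,G_f^-)=\max(G_g^+,G_g^-)$ you would need to show that, after possibly replacing $g$ by $g^{-1}$, $G_g^\pm=G_f^\pm$. You would then need to show that a H\'enon map with the same Green functions as $f$ fixes both ends of the axis of $f$. Lamy's description of axis stabilisers would then give $f^N=g^M$. Neither step is in your proposal, so as it stands it reduces the statement to its hard part without proving it.
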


\subsection{Over a number field}\label{subsec:}
If $f$ is defined over a number field $K$, we can construct the Green functions for any place $v \in \cM(K)$. This
defines two $f$-invariant strongly nef adelic divisors $\overline \theta_{f,K}^+, \overline \theta_{f,K}^- \in \hat \Div(\A^2_K /
\OO_K)$ such that 
\begin{equation}
  f^* \overline \theta_{f,K}^\pm = \lambda(f)^{\pm 1} \overline \theta_{f,K}^\pm.
  \label{}
\end{equation}
They are unique up to multiplication by a positive constant. We define $\overline \theta_{f,K} = \overline \theta_{f,K}^+ +
\overline \theta_{f,K}^-$.
The \emph{canonical height} of $f$ is $\hat{h}_f \coloneqq h_{\overline \theta_{f,K}}$ and we have that 
\begin{equation}
  \mu_{f,v} = \frac{1}{2} c_1 (\overline \theta_f)_v^2 = \frac{1}{2} dd^c G^+_{f,v} \wedge dd^c G^-_{f,v}.
  \label{<+label+>}
\end{equation}
In particular, the canonical height $h_f$ satisfies the \emph{Northcott property} which implies that $q \in \overline
K^2$ is $f$-periodic if and only if $h_{f,K} (q) = 0$ (\cite{zbMATH06226668}).

More precisely, consider the divisor $D = L_\infty$ which is the line at infinity in $\P^2_K$. Consider the projective model 
\begin{equation}
    \sX = \P^2_{\OO_K}
\end{equation}
over $\OO_K$ and define $\overline \sD$ as the model adelic divisor where $\sD$ is the closure of $L_\infty$ in $\P^2_{\OO_K}$ and the Green function $G_{\overline \sD}$ over $\P^2 (\C) \setminus L_\infty = \C^2$ is 
\begin{equation}
    G_{\overline \sD} (x,y) = \log^+ \max ( |x|, |y|).
\end{equation}
Then, from \cite{abboudRigidityPeriodicPoints2024}, we have with respect to the boundary topology of Yuan and Zhang that 
\begin{equation}
    \frac{1}{\lambda(f)^n} (f^{\pm n})^* \overline \sD \xrightarrow[n \rightarrow + \infty]{} \overline \theta_{f,K}^\pm.
\end{equation}

\subsection{Over a function field}\label{subsec:over-function-field}
If $f$ is defined over the function field $K = K(B)$ of a projective curve $B$ over some field $k$, then as in the number field case, the construction of the Green functions of $f$ works for any $b \in \cM(K)$. This defines also strongly nef adelic divisors
$\widetilde \theta_{f,K}^\pm \in \hat \Div(\A^2_{K(B)})$ and the \emph{geometric canonical height} of $f$ is defined as 
\begin{equation}
  h_{f,K}^{geom} = h_{\widetilde \theta^+_{f,K} + \widetilde \theta^-_{f,K}}.
  \label{<+label+>}
\end{equation}
More precisely, if $D = L_\infty$ is the line at infinity in $\P^2_K$ then define $\sX = \P^2_k \times B$ and let $\widetilde \sD$ be the pullback of the line at infinity in $\P^2_k$ then $(\sX, \widetilde \sD)$ is a projective model of $(\P^2_K, L_\infty)$ and from \cite{abboudRigidityPeriodicPoints2024} we have with respect to the boundary topology of Yuan and Zhang that 
\begin{equation}
    \frac{1}{\lambda(f)^n} (f^{\pm n})^* \widetilde \sD \xrightarrow[n \rightarrow +\infty]{} \widetilde \theta_{f,K}^\pm.
\end{equation}
\subsection{In a family}\label{subsec:in-a-family}
Suppose now that $f : \A^2 \rightarrow \A^2$ is a regular automorphism defined over the function field of a
projective curve $B$ defined over a number field $K$. Then, up to taking an open subset $B' \subset B$ we have that
$f$ extends to an automorphism of $\A^2_K \times B'$. The construction of the Green functions also works in this setting
and was also done in \cite{abboudRigidityPeriodicPoints2024}. We also have the existence of two strongly nef effective
adelic divisors $\overline \theta^\pm_f \in \hat \Div (\A^2_K \times B' / \OO_K)$, their Green functions are the
\emph{fibered Green functions of $f$}. For every $b \in B' (\overline K)$, the adelic divisors $\theta_f^\pm$ restrict on
the fiber $\A^2_K \times \left\{ b \right\}$ to the adelic divisors $\overline \theta^\pm_{f,K(b)} \in \hat \Div (\A^2_K
\times \left\{ b \right\} / \OO_{K (b)})$ constructed in the previous paragraph. In particular, the Green functions are
equal to 
\begin{equation}
  \forall (x,y,b) \in \A^2 \times B (\overline K), \quad G^\pm_{f} ( (x,y, b)) = G_{f_b}^\pm (x,y).
  \label{<+label+>}
\end{equation}
In particular, 
\begin{equation}
  \forall (x,y,b) \in A^2 \times B (\overline K), \quad h_f ( (x,y, b)) = h_{f_b} ((x,y))
  \label{<+label+>}
\end{equation}
and $((x,y),b)$ is $f$-periodic if and only if $(x,y)$ if $f_b$-periodic.
More precisely, $X = \P^2_K$ and $D=L_\infty$ be the line at infinity. Let $\sB$ be a projective model of $B$ over $\OO_K$ and $\sX = \P^2_{\OO_K} \times \sB$. Define $\overline \sD$ as follows: $\sD$ is the closure of $L_\infty$ in $\sX$ and the Green function $G_{\overline \sD}$ over $\P^2(\C) \times B(\C) \setminus (L_\infty \times B(\C) )  = \C^2 \times B(\C)$ is equal to 
\begin{equation}
    G_{\overline \sD}(x,y,b) = \log^+ \max (|x|, |y|).
\end{equation}
From \cite{abboudRigidityPeriodicPoints2024}, we have with respect to the boundary topology of Yuan and Zhang that 
\begin{equation}
    \frac{1}{\lambda(f)^n} (f^{\pm n})^* \overline \sD \xrightarrow[n \rightarrow + \infty]{} \overline \theta_f^\pm.
\end{equation}
Now, we also have that $\widetilde \sX = \P^2_K \times B$ is a projective model of $\P^2_{K(B)}$ over $B$ and the divisor $\sD$ restricts to the divisor $\widetilde \sD$ from \S \ref{subsec:over-function-field} above. So we also recover the geometric canonical height of $f$ from this construction and in particular, 
\begin{equation}
    \deg_{\overline \theta^+_f + \overline \theta^-_f} (\A^2_K \times B') = (\widetilde \theta^+_{f, K(B)} + \widetilde \theta^-_{f,K(B)})^3.
\end{equation}

Finally, if we restrict these limits of divisors over $\A^2_{K(B)}$, i.e on the generic fiber of $\A^2_K \times B$, then we recover the $b$-divisors $\theta^\pm_{f, K(B)}$ associated to $f : \A^2_{K(B)} \rightarrow \A^2_{K(B)}$ discussed in the beginning of this section.

\section{Geometric height and an inequality}\label{sec:geometric-height}
We state and prove the next proposition for geometric adelic divisors and line bundles but it holds verbatim for
arithmetic adelic divisors and line bundles with the exact same proof.

\begin{prop}\label{prop:inequality-intersection-number}
  Let $K$ be the function field of a projective curve over some field. Let $U$ be a quasiprojective variety of dimension
  $n$ over $K$ and let $\widetilde L_1, \dots, \widetilde L_n$ be strongly nef geometric adelic line bundles and
  $\widetilde D$ an effective geometric adelic divisor, then 
  \begin{equation}
    \widetilde D \cdot \widetilde L_1 \cdots \widetilde L_n \geq \sum_{v \in \cM(K)} n_v \int_{U^{\an,v}} G_{\widetilde D, v} c_1 (\widetilde L_1)_v
    \cdots c_1 (\widetilde L_n)_v.
    \label{<+label+>}
  \end{equation}
\end{prop}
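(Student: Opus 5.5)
We first prove the inequality for model objects and then pass to the limit in the boundary topology. Write $\widetilde D = \lim_i \widetilde \sD_i$ with $\widetilde \sD_i$ effective model adelic divisors, and $\widetilde L_k = \lim_i \widetilde \sL_{k,i}$ with $\widetilde \sL_{k,i}$ nef model adelic line bundles. After passing to common refinements we may assume that, for each $i$, all of these are defined on one projective model $\sX_i$ of a completion $X_i$ of $U$.

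\textbf{Model case.} Apply the inductive formula \eqref{eq:intersection-number} with the last factor $\OO(\widetilde \sD_i)$ and the canonical section $s_{D_i}$. Since $-\log\|s_{D_i}\|_v = G_{\widetilde \sD_i,v}$, this gives
\begin{equation*}
  \widetilde \sD_i \cdot \widetilde \sL_{1,i}\cdots \widetilde \sL_{n,i}
  = \left(\widetilde \sL_{1,i}\cdots \widetilde \sL_{n,i}\right)_{|D_i}
  + \sum_{v} n_v \int_{X_i^{\an,v}} G_{\widetilde \sD_i,v}\, c_1(\widetilde \sL_{1,i})_v\cdots c_1(\widetilde \sL_{n,i})_v .
\end{equation*}
The first term is an intersection of nef model line bundles on the effective cycle $D_i$. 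It is therefore $\geq 0$ by the positivity proposition for nef model bundles, applied to each component of $D_i$ with nonnegative multiplicity. In the geometric case nefness also gives nonnegativity on subvarieties. Dropping this term yields the inequality at level $i$, with the integral taken over $X_i^{\an,v}$. Since $G_{\widetilde\sD_i,v} \ge 0$, restricting the integral to $U^{\an,v}$ only decreases the right-hand side. So the model inequality also holds with integrals over $U^{\an,v}$.

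\textbf{Passage to the limit.} The left side converges to $\widetilde D\cdot\widetilde L_1\cdots\widetilde L_n$ by definition of the intersection number of limits. Here I would note that $\widetilde D$ is a difference of strongly nef divisors, or else use multilinearity and continuity of the intersection number in the boundary topology. For the right side we want
\begin{equation*}
  \liminf_i \int_{U^{\an,v}} G_{\widetilde \sD_i,v}\, c_1(\widetilde \sL_{1,i})_v\cdots \geq \int_{U^{\an,v}} G_{\widetilde D,v}\, c_1(\widetilde L_1)_v\cdots
\end{equation*}
for each $v$. Only finitely many $v$ matter at each stage, or we use Fatou over $v$, since all terms are nonnegative. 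The Green functions $G_{\widetilde\sD_i,v}\ge 0$ converge locally uniformly to $G_{\widetilde D,v}$, and the measures converge weakly on $U^{\an,v}$. Fix a compactly supported continuous cutoff $0\le\chi\le 1$ on $U^{\an,v}$. Then
\begin{equation*}
  \int G_{\widetilde \sD_i,v}\,\mu_i \geq \int \chi\, G_{\widetilde \sD_i,v}\,\mu_i \longrightarrow \int \chi\, G_{\widetilde D,v}\,\mu .
\end{equation*}
This uses uniform convergence of $\chi G_i$ on a compact set together with weak convergence of $\mu_i$, which have uniformly bounded masses by Theorem~\ref{thm:total-mass-measure}. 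Letting $\chi\uparrow 1$ and using monotone convergence gives the lower bound. Summing over $v$ with Fatou's lemma concludes.

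\textbf{Main obstacle.} The hardest step is the convergence of the left-hand side, i.e.\ making sense of $\widetilde D\cdot\widetilde L_1\cdots\widetilde L_n$ when $\widetilde D$ is merely effective rather than strongly nef. Approximating $\widetilde D$ by effective model divisors that are Cauchy in the boundary topology, the difference of intersection numbers is bounded by $\epsilon_i\,\widetilde\sD_0\cdot\widetilde\sL_{1,i}\cdots$, which is uniformly bounded. Handling this carefully is the technical point. The measure-theoretic liminf is standard once the cutoff argument is in place.
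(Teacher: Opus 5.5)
Your proposal is correct and follows essentially the paper's own proof: you obtain the model-level inequality from the inductive formula \eqref{eq:intersection-number} by dropping the nonnegative restricted term, then pass to the limit using a compactly supported cutoff, local uniform convergence of the Green functions, weak convergence of the measures, and monotone convergence along an exhaustion. Your additional care fills in points the paper passes over silently: the uniformly bounded masses, Fatou over the places, and the observation that the left-hand side is only well defined when $\widetilde D$ is integrable, as in the paper's applications where it is strongly nef.
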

\begin{proof}
  Let $\widetilde D_i$ and $\widetilde L_{k,i}$ be sequence of model divisors and line bundles converging respectively to
  $\widetilde{D}$ and $\widetilde{L}_k$. We can choose in particular $D_i$ effective and $\widetilde L_{k,i}$ nef.
  Define for every $v, \mu_{i,v} = c_1 (\widetilde L_{1,i})_v \cdots c_1 (\widetilde L_{n,i})_v$ and $\mu_v = c_1
  (\widetilde L_1)_v \cdots c_1 (\widetilde L_n)_v$.
  We have that 
  \begin{equation}
    \widetilde D \cdot \widetilde L_1 \cdots \widetilde L_d = \lim_i \widetilde D_i \cdot \widetilde L_{1,i} \cdots \widetilde
    L_{d,i}.
    \label{<+label+>}
  \end{equation}
  By \eqref{eq:intersection-number}, we have that 
  \begin{equation}
    \widetilde D_i \cdot \widetilde L_{1,i} \cdots \widetilde L_{n,i} \geq \sum_v \int_{U^{\an,v}} G_{\widetilde D_i}
    \mu_{i,v}.
    \label{ }
  \end{equation}
  Now, for every $v \in \cM(K)$, let $\phi_v$ be a continuous function over $U^{\an,v}$ with compact support $\Omega_v$
  such that $0 \leq \phi \leq 1$. Then we have 
  \begin{equation}
    \widetilde D_i \cdot \widetilde L_{1,i} \cdots \widetilde L_{n,i} \geq \sum_v \int_{U^{\an,v}} \phi_v G_{\widetilde D_i} \mu_{i,v}.
    \label{<+label+>}
  \end{equation}
  Now, $\phi_v G_{\widetilde D_i}$ has support over the compact subset $\Omega_v$ and since $\mu_{i,v}$ weakly converges
  towards $\mu_v$ we have that 
  \begin{equation}
    \widetilde D \cdot \widetilde L_1 \cdots \widetilde L_n \geq \sum_v \int_{U^{\an,v}} \phi_v G_{\widetilde D, v} \mu_v.
    \label{<+label+>}
  \end{equation}
  We apply this inequality with a compact exhaustion $\phi_{n,v}$ of $U^{\an,v}$ to get the result by the monotone
  convergence theorem. The existence of a compact exhaustion is classical if $v$ is archimedean and if $v$ is
  non-archimedean this exists by \cite{chambert-loirFormesDifferentiellesReelles2012}.
\end{proof}

Let $f: B\times \A^2\to B\times \A^2$ be a family of regular plane polynomial automorphisms over some field $K$ of characteristic zero.
The set of periodic points $\Per(f_\eta)$ and the set of zero height $\{\Tilde{h}_f=0\}$ coincide except for the trivial case. We say that a family of regular plane polynomial automorphisms $f$ is \emph{isotrivial} if there exists a finite map $S_f: B' \to B$ of curves and a family of affine maps $(\varphi_{f,b})_{s\in B'}$ such that $\varphi^{-1}_{f,b} \circ f_{S_f(b)}\circ \varphi_{f,b} $ is defined over $\overline{K}$. Given two families $f$ and $g$, they are \emph{simultaneously isotrivial} if we can choose $S_f=S_g$ and $\varphi_f=\varphi_g$.
\begin{thm}[\cite{zbMATH07714404}]\label{thm:GVhenon}
    If $f$ is a non-isotrivial family of regular plane polynomial automorphisms, then $\Per(f_\eta)=\{\Tilde{h}_f=0\}$.
\end{thm}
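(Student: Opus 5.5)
The inclusion $\Per(f_\eta)\subset\{\widetilde h_f=0\}$ is formal and I will do it first. Write $\widetilde h^\pm=h_{\widetilde\theta^\pm_{f,K(B)}}$. The invariance $f^*\widetilde\theta^\pm=\lambda(f)^{\pm1}\widetilde\theta^\pm$ gives $\widetilde h^\pm(f(q))=\lambda(f)^{\pm1}\widetilde h^\pm(q)$. If $f^n(q)=q$, then $\widetilde h^\pm(q)=\lambda(f)^{\pm n}\widetilde h^\pm(q)$. Since $\lambda(f)>1$ and both heights are $\geq 0$ (the divisors are effective), this forces $\widetilde h^\pm(q)=0$. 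Hence $\widetilde h_f(q)=0$.

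For the converse, let $q\in \overline{K(B)}^2$ with $\widetilde h_f(q)=0$. I pass to a finite cover $B'\to B$ over which $q$ is defined, with $K'=k(B')$. By \eqref{eq:height} in the geometric setting, $\widetilde h_f(q)$ is a nonnegative combination of the local values $G^+_{f,b}(q)+G^-_{f,b}(q)\geq0$ over the places $b\in B'$. So zero height means $G^\pm_{f,b}(q)=0$ at every place $b$. By the characterisation of the Green functions (property (2) above), this says the full orbit $\{f^n(q)\}_{n\in\mathbf Z}$ lies in the set $\{G_{f,b}=0\}$, which is bounded at each place. More precisely, I will make the following bound explicit.

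\emph{Claim.} There is an effective divisor $E$ on $B'$, determined only by the poles of the coefficients of $f$ and $f^{-1}$, such that $\{G_{f,b}=0\}$ is contained in the polydisc of radius $e^{\ord_b E}$. At places of good reduction this is the unit polydisc.

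Granting the Claim, every orbit point $f^n(q)=(x_n,y_n)$ has coordinates in the finite-dimensional $k$-vector space $H^0(B',\OO(E))$. The orbit thus lives in the $k$-points of a fixed affine $k$-variety $V=H^0(B',\OO(E))^2$. On this set $f$ acts as the restriction of an algebraic map defined over $k$: substituting sections into the polynomial $f$ is polynomial in their coefficients. By the same Claim, $f$ maps the bounded set into itself.

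Now suppose $q$ is not periodic, so the orbit is infinite. I then take the Zariski closure $Z\subset V$ of the orbit, which is positive-dimensional and $f$-invariant. Evaluating sections at the generic point of $B'$ gives a $k$-family of points of $\A^2_{K'}$ in the filled set, and from it I will build an affine conjugacy trivialising $f$. The idea is to pick points of $Z(k)$ in general position to define a $K'$-affine map $\varphi$ sending constant sections to these sections. Then $\varphi^{-1}\circ f\circ\varphi$ maps an infinite, Zariski-dense family of constant points to bounded, hence constant, points. A polynomial map of bounded degree that sends a Zariski-dense set of $\overline k$-points to $\overline k$-points has coefficients in $\overline k$. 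This gives isotriviality, contradicting the hypothesis.

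I expect two steps to be the main obstacles. The first is producing Zariski-dense constant data from the infinite orbit: one must use that the orbit is not contained in a curve. To get this, I would use Proposition~\ref{prop:restriction-to-curve-has-positive-degree}, which says a curve carries positive $\theta^\pm$-degree, so a curve cannot contain infinitely many bounded orbit points without unbounded height at some place. The second is the uniform bound in the Claim, which I would prove by the standard filtration argument near $p_\pm$ carried out uniformly in the place $b$.
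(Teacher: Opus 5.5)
The paper does not prove this statement; it cites it from Gauthier--Vigny. So I judge your argument on its own merits. The inclusion $\Per(f_\eta)\subset\{\widetilde h_f=0\}$ is correct. The set-up of the converse is also sound:
\begin{itemize}
\item vanishing of all local Green functions;
\item a pole bound $E$ for $\{G_{f,b}=0\}$;
\item the orbit lying in $V(k)$ with $V=H^0(B',\OO(E))^2$;
\item its Zariski closure $Z\subset V$ carrying a $k$-automorphism $f_Z$ with $f\circ\Phi=\Phi\circ f_Z$, where $\Phi$ is the $K'$-linear evaluation map, so every point of $\Phi(Z(k))$ has bounded orbit and hence height zero.
\end{itemize}
The orbit is indeed Zariski dense in $\A^2_{K'}$, but the reason is invariance, not the one you sketch. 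An $f^m$-invariant curve $C$ would give $\lambda(f)^m\deg(\theta^+_{|C})=\deg(\theta^+_{|C})$, contradicting Proposition~\ref{prop:restriction-to-curve-has-positive-degree}. Positivity of the degree alone does not stop a curve from containing infinitely many height-zero points, as constant maps show.

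The genuine gap is the last step, which is the only place non-isotriviality enters. Let $\varphi$ be the $K'$-affine map sending $0,e_1,e_2$ to $\Phi(v_0),\Phi(v_1),\Phi(v_2)$. By linearity, $\varphi(k^2)=\Phi(P(k))$, where $P\subset V$ is the affine plane through $v_0,v_1,v_2$. For $\varphi^{-1}\circ f\circ\varphi$ to send a Zariski-dense set of $c\in k^2$ into $k^2$, $f$ must map a dense part of $\Phi(P(k))$ back into $\Phi(P(k))$, i.e.\ $f$ must preserve the plane $P$ of sections. Nothing forces this: $Z$ is only an orbit closure in a large space of sections, and need not contain, let alone preserve, any plane.

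The justification ``bounded, hence constant'' conflates two different things. The Claim only bounds poles. So $\varphi^{-1}$ of a height-zero point lies in a finite-dimensional Riemann--Roch space, for a divisor depending on the coefficients of $\varphi^{-1}$, not in $k^2$. Constancy would require the bound to be trivial at every place, i.e.\ coordinates in which $f$ has good reduction at all places of $B'$. But then the coefficients of $f$ are regular on the complete curve $B'$, hence constant, so this is isotriviality itself.

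What your construction actually yields is that every fibre $f_t$ is a factor of the single $k$-system $(Z,f_Z)$ through the linear evaluation map $L_t$. Upgrading this to an affine conjugacy with a constant map is the whole rigidity content of the theorem; note the $L_t|_Z$ need not even be birational. This needs an input you do not supply. One option is a local argument at a place of bad reduction ruling out a non-periodic orbit inside $V(k)$, followed by a proof that potential good reduction at every place forces isotriviality. Another is a complex-analytic rigidity argument for the normal family of sections $t\mapsto f_t^n(q(t))$.
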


\begin{prop}\label{prop:current-not-vanishing}
  Let $f$ and $g$ be two families of dynamically independent regular plane polynomial automorphisms defined over the function field of a
  projective curve $B$ over a number field $K$. Using the notations of \S \ref{subsec:in-a-family}, define $\overline R = \overline \theta_f + \overline \theta_g$, then $R^3 = 0$ if and only if the families $f$ and $g$ are simultaneously isotrivial.
\end{prop}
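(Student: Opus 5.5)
Here $R = \theta_f+\theta_g$ is the geometric part of $\overline R$, a strongly nef $b$-divisor on $\A^2_K\times B'$ (dimension $3$). By Corollary~\ref{cor:intersection-number-fibration-quasiprojective} and the identification in \S\ref{subsec:in-a-family}, $R^3$ equals the top self-intersection $(\widetilde\theta_{f,K(B)}+\widetilde\theta_{g,K(B)})^3$ of the geometric adelic divisor $\widetilde R$ over $\A^2_{K(B)}$. The plan is therefore to work entirely over the function field $k=K(B)$ and expand
\begin{equation*}
  \widetilde R^3=\sum \widetilde\theta_{\bullet}\cdot\widetilde\theta_{\bullet}\cdot\widetilde\theta_{\bullet}
\end{equation*}
into mixed terms. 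Each mixed term is a nonnegative number, since all the divisors are strongly nef and effective. Hence $R^3=0$ if and only if every mixed term vanishes.

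For the direction ``simultaneously isotrivial $\Rightarrow R^3=0$'', after the base change $S:B'\to B$ (which multiplies intersection numbers by $\deg S$) and conjugation by the common affine family $\varphi$, both $f$ and $g$ become constant families. The conjugation is a birational change of model of $\A^2\times B'$ that acts on boundary divisors by bounded amounts, so it does not change the limiting adelic divisors' top intersection. For constant families, $\widetilde\theta^\pm$ are pulled back from $\A^2_{\overline K}$ and carry trivial Green functions at all places of $K(B)$. Thus $\widetilde R$ is pulled back from a $2$-dimensional base, and its cube vanishes.

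For the converse, assume $R^3=0$ and use Proposition~\ref{prop:inequality-intersection-number} with $\widetilde D=\widetilde\theta^\pm_f$ or $\widetilde\theta^\pm_g$ and $\widetilde L_1=\widetilde L_2$ a combination of the $\theta$'s. For example, $0=\widetilde\theta^+_g\cdot\widetilde\theta^+_f\cdot\widetilde\theta^-_f\ge\sum_v n_v\int G^+_{g,v}\,\mu_{f,v}$, so $G^+_{g,v}=0$ on $\Supp\mu_{f,v}$ at every place $v$ of $K(B)$; symmetrically $G^\pm_{g}$ and $G^\pm_f$ vanish on each other's equilibrium supports. By Proposition~\ref{prop:polynomial-convex-hull} and the maximum principle (these $G$ are psh), $\{G_f=0\}=\{G_g=0\}$ at every place $v$, so the geometric canonical heights vanish on the same sets. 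In particular $\widetilde h_f$ and $\widetilde h_g$ vanish simultaneously on the $\widetilde h_f=0$ locus.

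The plan then splits into two cases. If $f$ is not isotrivial, Theorem~\ref{thm:GVhenon} gives $\Per(f_\eta)=\{\widetilde h_f=0\}$; moreover $\widetilde h_f\cdot$ (the vanishing of all mixed terms) should force $\widetilde\theta_f^{\,2}\cdot\widetilde\theta_f=0$, contradicting non-isotriviality via Gauthier--Vigny. So the main step is to show that non-isotriviality of $f$ forces $\widetilde\theta_f^3>0$. This is the obstacle: I would try to extract it from \cite{zbMATH07714404}, where non-isotriviality is characterized by positivity of the bifurcation-type measure $(\theta_f)^3$ on $B$. Hence both $f$ and $g$ are isotrivial. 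To get \emph{simultaneous} isotriviality, trivialise $f$ over $B'$, so that $f=f_0$ is constant. The equalities $\{G_{f_b}=0\}=\{G_{g_b}=0\}$ then hold for generic complex parameters $b$, and Lamy's Theorem~\ref{prop:same-measure-common-iterate} gives $\mu_{f_0}=\mu_{g_b}$. Obtaining this from equality of the $K$-sets requires mixed-term vanishing at archimedean fibers, and it would show $g$ preserves a fixed measure, hence is conjugate by the same $\varphi$ to a constant family. Dynamical independence excludes the degenerate outcome $f^N=g^M$.
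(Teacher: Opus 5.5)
\textbf{Gap.} Your sufficiency argument is fine. The first half of your converse also coincides with the paper's proof: vanishing of the mixed terms, Proposition~\ref{prop:inequality-intersection-number}, then Proposition~\ref{prop:polynomial-convex-hull}, giving $\{G_{f,v}=0\}=\{G_{g,v}=0\}$ at every place and hence $\{\widetilde h_f=0\}=\{\widetilde h_g=0\}$.

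The step you flag as the obstacle cannot be filled, because $\widetilde\theta_f^{\,3}=0$ for \emph{every} family, isotrivial or not. Indeed $f$ is an automorphism of $\A^2_{K(B)}$, so $f^*$ preserves top intersection numbers, while $f^*\widetilde\theta_f^{\pm}=\lambda(f)^{\pm1}\widetilde\theta_f^{\pm}$. Hence for $a+b=3$ one gets $(\widetilde\theta_f^+)^a\cdot(\widetilde\theta_f^-)^b=\lambda(f)^{a-b}(\widetilde\theta_f^+)^a\cdot(\widetilde\theta_f^-)^b$. Since $a-b$ is odd and $\lambda(f)>1$, every such term vanishes. The same computation kills the corresponding fibered measure on $B(\C)\times\C^2$, so it is not a bifurcation measure for H\'enon families. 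Put differently, $\widetilde\theta_f^{\,3}$ is itself a summand of $\widetilde R^3$, so the implication $R^3=0\Rightarrow\widetilde\theta_f^{\,3}=0$ carries no information. All positivity of $R^3$ must come from the mixed $f$--$g$ terms, and Gauthier--Vigny provide no positivity statement of the kind you want. Your conclusion that both families are isotrivial is therefore unsupported.

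Your last paragraph has a further problem. The vanishing obtained from Proposition~\ref{prop:inequality-intersection-number} in the geometric setting concerns the places of the function field, i.e.\ the non-archimedean absolute values $e^{-\ord_b}$ attached to points of $B$. It says nothing directly about the complex specialisations $f_b,g_b$, so $\{G_{f_b}=0\}=\{G_{g_b}=0\}$ for generic complex $b$ does not follow. Even if you obtained $\mu_{f_0}=\mu_{g_b}$ for uncountably many $b$, Theorem~\ref{prop:same-measure-common-iterate} would give a common iterate, i.e.\ a contradiction with independence, not a common trivialisation.

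The missing idea is to use the equality $\{\widetilde h_f=0\}=\{\widetilde h_g=0\}$ directly on the generic fibre, after base change to $\C(B)$ so that the field of constants is uncountable. The paper then splits into three cases:
\begin{enumerate}
\item If neither family is isotrivial, Theorem~\ref{thm:GVhenon} gives $\Per(f_\eta)=\Per(g_\eta)$, which is incompatible with dynamical independence.
\item If exactly one, say $f$, is isotrivial, then $\{\widetilde h_f=0\}$ is uncountable while $\{\widetilde h_g=0\}=\Per(g_\eta)$ is countable.
\item If both are isotrivial, trivialise $f$. If $g$ is not constant in that frame, some constant point $x$ is not $g_\eta$-periodic and satisfies $\widetilde h_f(x)=0<\widetilde h_g(x)$.
\end{enumerate}
This reaches simultaneous isotriviality without any positivity statement for a single family.
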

\begin{proof}
  Since $\overline R = \overline \theta_f + \overline \theta_g$ we have that $\overline R$ is strongly nef so that
  $R^3 \geq 0$.
  The condition being clearly sufficient, we prove that it is also necessary. 
Now recall that by \S \ref{subsec:in-a-family} we have that 
  \begin{equation}
      R^3 = (\widetilde \theta_f + \widetilde \theta_g)^3.
  \end{equation}  
    We may suppose that the two families $f$ and $g$ are defined over $B_\C = B \times_{\spec K} \spec \C$ by a base
  change. By Proposition
  \ref{prop:inequality-intersection-number}, used in the geometric setting we have
  \begin{equation}
 R^3 \geq \sum_{b \in \cM(\C(B))} \int_{X^{\an,b}} G_{f,b} (dd^c G_{g,b})^2 + \int_{X^{\an,v}}
    G_{g,b} (dd^c G_{f,b})^2.
    \label{}
  \end{equation}
  If the intersection product is zero, then we get that for every $b \in \cM(\C(B)), \supp \mu_{f,b} \subset \left\{ G_{g,b}
  = 0 \right\}$. By Proposition \ref{prop:polynomial-convex-hull} this implies that $\left\{ G_{f,b} = 0 \right\}
  \subset \left\{ G_{g,b} = 0 \right\}$, and by symmetry that
  \begin{equation}
    \forall b \in \cM(\C(B)), \quad \left\{ G_{f,b} = 0 \right\} = \left\{ G_{g,b} = 0 \right\}.
    \label{<+label+>}
  \end{equation}
  By the height decomposition~\eqref{eq:height}, we have
  \begin{equation}
    \left\{ \tilde h_f = 0 \right\} = \left\{ \tilde h_g = 0 \right\}.
    \label{<+label+>}
  \end{equation}

  If neither $f$ nor $g$ is isotrivial, then $\mathrm{Per}(f_\eta) = \mathrm{Per}(g_\eta)$ by Theorem~\ref{thm:GVhenon}.
  If one of them, say $f$, is isotrivial while $g$ is not, then the set $\left\{\Tilde{h}_f = 0 \right\}$ is
  uncountable, whereas $\left\{\Tilde{h}_g = 0 \right\}$ is countable. Now suppose that both $f$ and $g$ are isotrivial.
  In this case, we may assume that $f$ is defined over $\C$.
  Suppose, for contradiction, that $g$ is not trivial.
  There exists a constant point $x \in \A^2(\overline{\C(B)})$ that is not $g_\eta$-periodic. Otherwise the set of
  $\overline {\C(B)}$-points defined over $\C$ would be $g_\eta$ invariant and that would imply that $g$ is defined over
  $\C$ and thus trivial.
  Then $\hat{h}_g(x) > 0$, whereas $\hat{h}_f(x) = 0$. Thus none of three possibilities above is possible and f and g have to be simultaneously isotrivial.
\end{proof}

Consider the fiber product $B\times\A^2\times\A^2$. Denote by $p_1$ and $p_2$ the projections to the first and the second factors $B\times \A^2$. Define
\begin{align}\label{eq:Zper}
    Z_\varepsilon \coloneqq \{(b,x,y)\in B\times\A^2\times\A^2 \mid \{\hat{h}_{f_b}(x)+\hat{h}_{g_b}(y) \leq \varepsilon \}.
\end{align}

\begin{prop}\label{prop:non-zariski-dense-Zper}
    Let $f$ and $g$ be two families of dynamically independent regular plane polynomial automorphisms defined over a curve $B$ over a number field
    $K$. Then there exists a small constant $\varepsilon>0$ such that  $Z_\varepsilon(\overline{K})$ is not Zariski dense.
\end{prop}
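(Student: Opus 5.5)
We argue by contradiction, following the Mavraki–Schmidt / DeMarco–Mavraki strategy. Work on the threefold-fibered space $U = \A^2\times\A^2\times B'$ (over $B'$, the open set where both families are regular automorphisms), of dimension $5$. On $U$ we consider the strongly nef adelic line bundle
\begin{equation*}
  \overline L = p_1^*\overline\theta_f + p_2^*\overline\theta_g + \pi^*\overline M ,
\end{equation*}
where $\overline M$ is an ample adelic line bundle on $B$ with nonnegative height and $\pi$ the projection to $B$. Then $h_{\overline L}(b,x,y) = \hat h_{f_b}(x) + \hat h_{g_b}(y) + h_{\overline M}(b)$. Adding $\pi^*\overline M$ guarantees bigness. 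By Corollary \ref{cor:intersection-number-fibration-quasiprojective}, $L^5 = 5\,(\theta_{f,\eta}+\theta_{g,\eta})^4\cdot M$ on the generic fibre $\A^2\times\A^2$. This is positive because $\theta^+\cdot\theta^-=1$ on each factor. The price of adding $\overline M$ is that the height is no longer essentially zero on $Z_\varepsilon$. We remedy this with the usual trick: replace $\overline M$ by $\delta\overline M$ and, if needed, pass to fibered powers $U^{[m]}$ over $B$. On the fibered power, $\sum_i(\hat h_{f_b}(x_i)+\hat h_{g_b}(y_i))\le m\varepsilon$ while $h_{\overline M}(b)$ is counted once. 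Alternatively, and more simply, we take $\varepsilon$ small relative to the essential minimum as in the proof of Theorem \ref{thm:arithmetic-equidistribution}.

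\textbf{Step 1 (reduction to equidistribution).} Suppose that for every $\varepsilon>0$ the set $Z_\varepsilon(\overline K)$ is Zariski dense. Taking $\varepsilon=1/n$ and a diagonal argument as in the paragraph after Theorem \ref{thm:arithmetic-equidistribution}, we obtain a generic sequence $p_n=(b_n,x_n,y_n)$ with $\hat h_{f_{b_n}}(x_n)+\hat h_{g_{b_n}}(y_n)\to 0$. We then need small total height for some big strongly nef bundle. For this we use $\overline N=p_1^*\overline\theta_f+p_2^*\overline\theta_g$ on the fibered power $U^{[m]}$. There $\overline N$ is big as soon as $m$ is large, via Corollary \ref{cor:intersection-number-fibration-quasiprojective}: the degree of the relevant top product picks up a contribution $(\widetilde\theta_f+\widetilde\theta_g)^3$-type horizontal term, which is positive by Proposition \ref{prop:current-not-vanishing}, because dynamically independent families with $R^3=0$ would be simultaneously isotrivial. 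The isotrivial case must be handled or excluded separately; over the base change it reduces to constant families, where the statement follows from Theorem \ref{thm:dissipativecommoniterate}-type finiteness. Generic sequences in $U^{[m]}$ of small height are built from $m$-tuples of points of $Z_\varepsilon$ in the same fibre, which exist by density.

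\textbf{Step 2 (two equidistributions).} Apply Theorem \ref{thm:arithmetic-equidistribution} at an archimedean place to the generic small sequence. The Galois orbits equidistribute to $c_1(\overline N)^{\dim}$. Projecting to the factors and comparing with the sequence's own structure, the limit measure restricted to the fibres must be simultaneously $\mu_{f_b}\otimes\mu_{g_b}$ on $\A^2\times\A^2$ fibrewise. It is also supported on the locus where the partial height vanishes. As in Proposition \ref{prop:current-not-vanishing}, this forces, for a positive-measure (hence non-polar) set of $b\in B(\C)$, that $\operatorname{Supp}\mu_{f_b}\subset\{G_{g_b}=0\}$ and symmetrically. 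By Proposition \ref{prop:polynomial-convex-hull} this gives $\{G_{f_b}=0\}=\{G_{g_b}=0\}$, hence $\mu_{f_b}=\mu_{g_b}$. Theorem \ref{prop:same-measure-common-iterate} then gives $f_b^N=g_b^M$ for uncountably many $b$, with $N,M$ bounded by countability. Therefore $f^N=g^M$ globally, contradicting dynamical independence.

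\textbf{Main obstacle.} The main obstacle is Step 1/2's passage from the degeneracy of the limit measure to the inclusion of supports. This requires identifying the equilibrium measure of $\overline N$ on the fibered power as a fibre integral $\int_B \mu_{f_b}^{\otimes}\otimes\mu_{g_b}^{\otimes}\,d\nu(b)$ against a bifurcation-type measure $\nu$ that is nonzero. Nonvanishing of $\nu$ is exactly the bigness coming from Proposition \ref{prop:current-not-vanishing}, and we would first try to obtain it via Theorem \ref{thm:total-mass-measure} and Proposition \ref{prop:inequality-intersection-number}.
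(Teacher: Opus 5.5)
There is a genuine gap, and it starts with the set you work with.

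\textbf{The statement you are proving is false.} You read \eqref{eq:Zper} literally: $x$ small for $f_b$ and $y$ small for $g_b$. Accordingly you use $\overline N=p_1^*\overline\theta_f+p_2^*\overline\theta_g$, which places $f$ and $g$ on different copies of $\A^2$. For that set the conclusion fails. For every $b$ one has $\{b\}\times\Per(f_b)\times\Per(g_b)\subset Z_0$. Moreover $\Per(f_b)$ is infinite and has $f_b$-invariant Zariski closure, so it is Zariski dense in $\A^2$, because a regular automorphism has no invariant curve. Hence $Z_\varepsilon(\overline K)$ is Zariski dense for every $\varepsilon\geq 0$.

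Correspondingly, two of your steps fail.
\begin{enumerate}
\item Your bigness claim is misattributed. The top self-intersection of $\overline N$, on $U$ or on any fibered power, is a positive combination of $\theta_f^3$ and $\theta_g^3$ computed on separate copies of $B\times\A^2$. Mixed products of $\theta_f$ and $\theta_g$ on a single copy, which is what $R^3$ is made of, never occur. So Proposition~\ref{prop:current-not-vanishing} is irrelevant to it.
\item More decisively, in Step 2 the equidistribution limit for $\overline N$ is simply $\mu_{f_b}\otimes\mu_{g_b}$ integrated against a base measure. This is perfectly consistent, and nothing forces $\Supp\mu_{f_b}\subset\{G_{g_b}=0\}$. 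In Proposition~\ref{prop:current-not-vanishing} that inclusion came from $R^3=0$ through Proposition~\ref{prop:inequality-intersection-number}, not from equidistribution. You never produce a second limit measure to compare with.
\end{enumerate}

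\textbf{The missing idea: the points must be small for both maps at once.} What the paper's proof actually handles is the fibered square of the set of $z$ with $\hat h_{f_b}(z)+\hat h_{g_b}(z)$ small. This is also what Theorem~\ref{thm:small-height} needs, since the fibre $Z_{(b,x)}$ must contain every small $y$. The paper's argument runs as follows.
\begin{enumerate}
\item Take $\overline R=\overline\theta_f+\overline\theta_g$ on $B\times\A^2$. It is big by Proposition~\ref{prop:current-not-vanishing}, once simultaneous isotriviality is set aside.
\item On $B\times\A^2\times\A^2$, take the two bundles $\overline R_h=p_1^*\overline R+p_2^*\overline\theta_h$ for $h\in\{f,g\}$. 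Both are big, since $R_h^5\geq(p_1^*R)^3\cdot(p_2^*\theta_h)^2>0$.
\item A generic sequence of such pairs with $\varepsilon\to 0$ is small for both $\overline R_f$ and $\overline R_g$. Theorem~\ref{thm:arithmetic-equidistribution} at an archimedean place then makes $c_1(\overline R_f)^5$ and $c_1(\overline R_g)^5$ proportional.
\item Disintegrating along $p_1$ yields $\mu_{f_b}=\mu_{g_b}$ for $\pi_*c_1(\overline R)^3$-almost every $b$. Theorem~\ref{prop:same-measure-common-iterate} then gives common iterates on a set charged by this measure.
\item By independence, the parameters with $f_b^N=g_b^M$ for some $N,M$ form a countable set, which this measure does not charge. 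This is the contradiction.
\end{enumerate}
The second copy of $\A^2$ exists precisely so that one bundle carries $\theta_f$ there and the other carries $\theta_g$, producing two measures to compare. Your fibered powers play no such role.
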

\begin{proof}
    We may suppose that $f$ and $g$ are not simultaneously isotrivial and invoke thus
    Proposition~\ref{prop:current-not-vanishing} and Theorem \ref{thm:total-mass-measure}. The adelic divisor $\overline R = \overline \theta_f + \overline \theta_g$ satisfies $R^3 > 0$. Define the adelic divisors 
    \begin{equation}
        \overline R_h = p_1^* \overline R + p_2^* \overline \theta_h
    \end{equation}
    for $h = f,g$.
    By Proposition \ref{prop:intersection-number-fibration}, we have that 
    \begin{equation}
        R_h^5 \geq (p_1^* R)^3 \cdot (p_2^* \theta_h)^2 = R^3 \cdot (\theta_{h, K(B)})^2 = R^3 > 0.
    \end{equation}
    We can therefore apply Theorem~\ref{thm:arithmetic-equidistribution} to $\overline R_h$. Fix an archimedean place $v$, we apply the equidstribution result only over this place so we drop the index $v$ in the notations. Suppose that the proposition does not hold, then there exists a sequence of closed points $p_n := (b_n, x_n,y_n) \in B \times \A^2 \times \A^2 (\overline K)$ such that $h_{\overline R_f}(p_n) \rightarrow 0$ and $h_{\overline R_g} (p_n) \rightarrow 0$. The two measures $c_1(\overline R_f)^5$ and $c_1 (\overline
    R_g)^5$ over $B(\C) \times \C^2 \times \C^2$ are then proportional and they are proportional to
    \begin{equation}
        \text{ for } h = f,g, \quad p_1^* c_1 (\overline R)^3 \wedge p_2^* c_1 (\overline \theta_h)^2.
    \end{equation}
    Therefore, for
    $\pi_*c_1(\overline{R})^3$-almost every parameter $b\in B(\C)$, we have $\mu_{f_b}=\mu_{g_b}$, and $f_b$ and $g_b$
    share a common iterate by Proposition~\ref{prop:same-measure-common-iterate}. However, the set of parameters $b \in
    B(\C)$ where $f_b$ and $g_b$ share a common iterate is countable and is not charged by the measure
    $\pi_*c_1(\overline{R})^3$, hence a contradiction. Therefore the set $Z_\varepsilon(\overline{K})$ is not Zariski
    dense for some $\epsilon > 0$.
\end{proof}

\section{Proof of Theorem~\ref{bigthmA} and~\ref{bigthmB} }
This section is devoted to the proof our main Theorems. We actually prove a stronger version.
\begin{thm}\label{thm:small-height}
    Let $f$ and $g$ be two dynamically independent families of regular plane polynomial automorphisms. Suppose that the set 
    $\{|\Jac(f)|=1 \} \cap \{|\Jac(g)|=1 \}$ is discrete (e.g., $f$ is a dissipative family) with respect to the analytic topology of $B(\C)$. Then there exist $\varepsilon>0$, a Zariski open subset $U$ of $B$ and $D > 0$ depending only on $f$ and $g$ such that for every $b \in U(\overline \K)$,  we have
  \begin{equation}
    \# \{z\in \A^2(\overline K) \mid \hat{h}_{f_b}(z)+\hat{h}_{g_b}(z) \leq \varepsilon \} \leq D.
    \label{eq:uniformhight}
  \end{equation}
\end{thm}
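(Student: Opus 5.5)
The plan is to follow the DeMarco--Mavraki--Schmidt scheme: pass to fibered powers over $B$ and use Proposition~\ref{prop:non-zariski-dense-Zper} together with an induction on the dimension of the base. Work in the fibered power
\[
\mathcal X_m = \A^2\times_K\cdots\times_K\A^2\times B \quad (m\text{ copies}),
\]
with the pulled-back adelic divisors $\overline R_{f,m}=\pi^*\overline R+\sum_i p_i^*\overline\theta_f$ (and similarly for $g$), and the diagonal restrictions. The quantity to control at $b$ is the set $S_b(\varepsilon)=\{z:\hat h_{f_b}(z)+\hat h_{g_b}(z)\le\varepsilon\}$. If $\#S_b(\varepsilon)>D$ for infinitely many $b$ (Zariski dense in $B$), then the $m$-tuples of distinct points of $S_b(\varepsilon)$ produce points of $\mathcal X_m$ of small height $h_{\overline R_{f,m}}+h_{\overline R_{g,m}}$. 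Here the height coming from $\pi^*\overline R$ is handled by noting $h_{\overline R}(b,z)$ is exactly the height from the fibered divisor, which is $\le\varepsilon$ on $S_b(\varepsilon)$.

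First, apply Proposition~\ref{prop:non-zariski-dense-Zper} to the pair restricted to the diagonal (taking $x=y=z$). This shows that, for small $\varepsilon$, the set $\{(b,z):\hat h_{f_b}(z)+\hat h_{g_b}(z)\le\varepsilon\}$ lies in a proper Zariski closed subset $Y_1\subset B\times\A^2$. The main point is to verify that Proposition~\ref{prop:non-zariski-dense-Zper} applies on the diagonal $B\times\Delta$ rather than on $B\times\A^2\times\A^2$, i.e. that $R^3>0$ for $\overline R=\overline\theta_f+\overline\theta_g$ restricted to $B\times\A^2$. This follows from Proposition~\ref{prop:current-not-vanishing}, given dynamical independence and the fact that simultaneous isotriviality would force $f_b,g_b$ to be conjugate to constant maps, which are handled separately.

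Next, decompose $Y_1$ into irreducible components and proceed by descending induction on dimension. Components that do not dominate $B$ lie over finitely many parameters; remove them from $U$. Components that are finite over $B$ contribute at most their degree over $B$ to $\#S_b(\varepsilon)$, uniformly in $b$, and the sum of these degrees gives $D$. The remaining case is a component $C\subset B\times\A^2$ of relative dimension one, i.e. a family of curves $C_b$ over $B$ on which both heights are small at a Zariski dense set of points. Apply equidistribution (Theorem~\ref{thm:arithmetic-equidistribution}) on $C$ to $\overline\theta_f|_C$ and $\overline\theta_g|_C$; Proposition~\ref{prop:restriction-to-curve-has-positive-degree} gives the positive degree needed. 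After shrinking $\varepsilon$, this forces the measures $c_1(\overline\theta_f|_C)^2$ and $c_1(\overline\theta_g|_C)^2$ to be proportional. Fiberwise, for generic $b$ the restrictions of $dd^c G_{f_b}$ and $dd^c G_{g_b}$ to $C_b$ are then proportional, so $(G^+_{f_b}+G^-_{f_b}-\alpha(G^+_{g_b}+G^-_{g_b}))|_{C_b}$ is harmonic on a curve.

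This is the step I expect to be the main obstacle. To get a contradiction, I would decompose further into $\pm$ parts. Using Theorem~\ref{thm:jacobian-1} together with the fact that $\Supp\mu$ does not meet a generic curve, one wants to conclude that $|\Jac(f_b)|=1=|\Jac(g_b)|$ for $b$ in an open set of $B(\C)$. This contradicts the discreteness hypothesis on $\{|\Jac(f)|=1\}\cap\{|\Jac(g)|=1\}$. So no one-dimensional component survives for small $\varepsilon$, and the remaining finite components give the uniform bound $D$ on a Zariski open $U$.
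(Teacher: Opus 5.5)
There is a genuine gap already in your first step. Proposition~\ref{prop:non-zariski-dense-Zper} concerns the fibered square $B\times\A^2\times\A^2$: it says that pairs $(x,y)$ of common small points over the same $b$ are not Zariski dense there. Intersecting with the diagonal does not give non-density in $B\times\A^2$, because the diagonal may itself lie inside $\overline{Z_\varepsilon}$.

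Nor can you rerun that proof on $B\times\A^2$ using only $R^3>0$. Its mechanism has two parts. The factor $p_1^*\overline R$ on the first copy supplies bigness and a base direction. The second copy is where $\overline\theta_f$ and $\overline\theta_g$ differ, so equidistribution forces $\mu_{f_b}=\mu_{g_b}$. On $B\times\A^2$ alone you only get proportionality of mixed measures $c_1(\overline\theta_f)^{3-k}\wedge c_1(\overline\theta_g)^{k}$ on $B(\C)\times\C^2$. These mix base and fiber directions and do not yield $\mu_{f_b}=\mu_{g_b}$.

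The paper does not claim your $Y_1$ is proper. It takes $Z=\overline{Z_\varepsilon}$ in the fibered square and studies $p_Z\colon Z\to B\times\A^2$. When $p_Z$ is dominant (common small points Zariski dense in $B\times\A^2$) and $\dim Z=3$, the bound comes from the uniform degree of the finite fibers $Z_{(b,x)}$, which contain every small $y$ over $b$. When $\dim Z=4$, that case is excluded by the Jacobian argument described below. You set up the fibered powers $\mathcal X_m$ but never use them, and this is exactly where they are needed.

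The curve case has two further problems.

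First, bigness. For a surface $C\subset B\times\A^2$ fibered in curves, Proposition~\ref{prop:restriction-to-curve-has-positive-degree} gives $\deg(\theta_f|_{C_\eta})>0$, i.e. $\theta_f\cdot\pi^*H\cdot C>0$. It does not give the bigness $(\theta_f|_C)^2>0$ needed to equidistribute on $C$. Even granting bigness, the Monge--Amp\`ere measures on $C(\C)$ do not disintegrate into the fiberwise measures $\ddc G_{f_b}|_{C_b}$. The paper again works on $Z\subset Z'\times_B Z'$ with $p_1^*\overline R+p_2^*\overline\theta^\pm_g$. There bigness comes from $R^2\cdot Z'>0$ (the Siu-inequality lemma) times $\deg(\theta^\pm_g|_{Z'_\eta})>0$. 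The product structure then yields the fiberwise measures over the base measure $\pi_*(c_1(\overline R)^2\wedge[Z'])$.

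Second, the wrong comparison. Comparing $f$ with $g$ gives harmonicity of $(G_{f_b}-\alpha G_{g_b})|_{C_b}$, which is not the hypothesis of Theorem~\ref{thm:jacobian-1}. The right comparison is $\overline\theta^+_g$ against $\overline\theta^-_g$ for the same map.
\begin{enumerate}
\item Since $\hat h_{g_b}$ is the sum of the two nonnegative heights $h_{\overline\theta^+_g}$ and $h_{\overline\theta^-_g}$, both are small on small points.
\item So equidistribution for $p_1^*\overline R+p_2^*\overline\theta^+_g$ and for $p_1^*\overline R+p_2^*\overline\theta^-_g$ yields $\ddc G^+_{g_b}|_{C_b}\propto \ddc G^-_{g_b}|_{C_b}$.
\item Hence $|\Jac(g_b)|=1$ for almost every $b$ with respect to an atomless measure on $B(\C)$.
\item Symmetrically for $f$, which contradicts discreteness; no open set of parameters is needed.
\end{enumerate}
Your remark about ``$\pm$ parts'' points in this direction. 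But it has to enter in the choice of adelic divisors you equidistribute against, not afterwards.
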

\begin{proof}
By a slight abuse of notation, all projections to the base $B$ will be denoted by $\pi$. We would like to find a Zariski open subset $U\subset B$ so that Eq~\eqref{eq:uniformhight} holds for any $b\in U(\overline{K})$.

Choose $\varepsilon$ satisfying the conclusion of Proposition~\ref{prop:non-zariski-dense-Zper} and
denote by $Z$ the Zariski closure of $Z_{\varepsilon}$. If the projection of $Z$ to $B$ is not dominant, then it suffices to set $U\coloneqq B\setminus\pi(Z)$ and take $D=0$. 

We may therefore assume that $Z$ is irreducible and that $\pi(Z)=B$. By Proposition~\ref{prop:non-zariski-dense-Zper}, $Z$ is not Zariski dense. Denote by $p_Z$ the restriction of $p_1$ to $Z$. 

\textbf{Case (i): $p_Z$ is dominant}.\ The generic flatness and Proposition~\ref{prop:independent-intersection-number-fiber} yield a family of (possibly reducible) subvarieties $Z_{(b,x)}\coloneqq p_Z^{-1}(b,x)$ parameterised by a Zariski open subset $V$ of $B\times\A^2$, all of fixed degree $D'\geq 1$. If $\dim Z=3$, then $Z_{(b,x)}$ are points and we are done. If $\dim Z=4$, then $Z_{(b,x)}$ are curves. Write $W = \A^2 \times B \setminus V$ and write $W = W^h \cup W^v$ its decomposition into horizontal and vertical parts with respect to $\pi$. Set $U = B \setminus \pi(W^v)$, each irreducible component of $W^h$ yields a family of points or curves of uniform bounded degree over $U$. Now take any parameter $b\in U(\overline{K})$. There are three possibilities. 
\begin{enumerate}
    \item There exists no \emph{$\epsilon$-small height}, i.e., $\hat{h}_{f_b}(x)+ \hat{h}_{g_b}(x) \leq \varepsilon$;
    \item There exists $x \in \A^2_b $ of $\epsilon$-small height such that $(b,x) \in V$. By the construction, all $\varepsilon$-small height points $y\in \A^2_b$ are contained in $Z_{(b,x)}$;
    \item All $\epsilon$-small height points are contained in $W_h \cap \A^2_b$.
\end{enumerate} 

\textbf{Case (ii): $p_Z$ is not dominant}. Then we consider its image $Z'\coloneqq p_Z(Z)\subset B\times \A^2$. We have a family $(Z'_b)_b$ of  subvarieties parameterised by $B$. Either it is a family of points (\textbf{Case (ii.a)}) then Theorem~\ref{thm:small-height} is finished, or it is a family of curves with fixed degree (\textbf{Case (ii.b)}). 

In general we just decompose $Z$ as a union of irreducible components and do the above analysis component by component. Since no condition on the Jacobians has been used so far, we have thus proved the following.
\begin{thm}\label{mainthm2}
    Let $f$ and $g$ be two dynamically independent families of regular plane polynomial automorphisms. Then there exist positive constants $\varepsilon>0$ and $D' > 0$, depending only on $f$ and $g$, such that for all but finitely many $b \in B(\overline \K)$, the set of small heights
  \begin{equation}
    \{z\in \A^2(\overline K) \mid \hat{h}_{f_b}(z)+\hat{h}_{g_b}(z) \leq \varepsilon \}
  \end{equation}
  is contained in a (possibly reducible) curve of degree $D'$.
\end{thm}

We continue our analysis and first go back to the case (i) $\dim Z=4$. Under the additional assumption on the Jacobians, we show that this case is in fact not possible. Let $\overline{L}_g^\pm$ be the strongly nef adelic divisor defined by $\overline{L}_g^\pm\coloneqq
p_1^*\overline{R}+p_2^*\overline{\theta}_g^\pm$. There
exists a generic sequence $z_n\coloneqq(b_n,x_n,y_n)\in Z(\overline{K})$ such that $\hat{h}_{\overline{L}_g^\pm}(z_n)\to
0$. To apply Theorem~\ref{thm:arithmetic-equidistribution}, we need to show that the restriction of $\overline L_g^\pm$
to $Z$ is big. This follows from proposition \ref{prop:restriction-to-curve-has-positive-degree}. Indeed, we need to
show that the geometric intersection number
\begin{equation}
    (L_g^\pm)^4_{|Z} \geq ( p_1^* R^3 \cdot p_2^* \theta_g^\pm)_{|Z} >0.
\end{equation}
The restriction of $Z$ to the generic fiber of $p_1$ is a curve $C$ and $p_2^* \theta_g^\pm$ restricts to the
$b$-divisor $\theta_{g, L}^\pm$ associated to the regular automorphisms induced by $g$ on the generic fiber $\A^2_L$
where $L$ is the function field of $\A^2_K \times B$. By Corollary
\ref{cor:intersection-number-fibration-quasiprojective} we get that 
\begin{equation}
  ( p_1^* R^3 \cdot p_2^* \theta_g^\pm)_{|Z} = R^3 \cdot \deg({\theta_{g, L}^{\pm}}_{|C})
\end{equation}
which is a positive number by Corollary \ref{cor:positive-degree-curve}.
By Theorem \ref{thm:arithmetic-equidistribution}, the two measures $c_1(\overline{L}_g^\pm)^5$ must then be proportional
on $B\times\A^2\times\A^2(\C)$. It follows that, for $\pi_*(c_1(\overline{R}))^3$-almost every parameter $b\in B(\C)$,
the measures $\ddc G_{g_b}^\pm|_{Z_b}$ are proportional, and moreover that $|\Jac(g_b)|=1$ by
Theorem~\ref{thm:jacobian-1}. By symmetry, the same conclusion holds for $f_b$, so that $|\Jac(f_b)|=1$ almost surely
for the same measure $\pi_*(c_1(\overline{R}))^3$. However, this measure does not charge discrete sets, which yields a
contradiction. 

Now let us look at Case (ii.b). We have a family of curves $Z'$ over $B$ that contains all the common periodic points. We need the following lemmas.

\begin{lemme}[Siu's inequality for $b$-divisor]
    Let $U$ be a $d$-dimensional quasi-projective variety over a field $k$ of characteristic 0. Let $\alpha$ be a nef $b$-divisors and $\beta$ a big and nef $b$-divisor. We have
    \begin{align}\label{eq:siu}
        \alpha \leq d \frac{\alpha\cdot \beta^{d-1}}{\beta^d} \beta.
    \end{align}
\end{lemme}
\begin{proof}
    Over a projective variety, this is the classical Siu's inequality, see for example \cite{dangDegreesIteratesRational2020}. It follows for $b$-divisors by a limit argument.
\end{proof}

\begin{lemme}\label{lem: Z'non-degenerate}
    $Z'$ is non-degenerate: $R^2 \cdot Z' >0$.
\end{lemme}
\begin{proof}[Proof of Lemma \ref{lem: Z'non-degenerate}]
    $R$ is big by Proposition~\ref{prop:current-not-vanishing}. Take any ample divisor $H$ in $B$. Siu's inequality~\eqref{eq:siu} implies that $3\frac{\pi^*H \cdot R^2}{R^3} R \geq \pi^*H$ and we have that $\pi^* H \cdot R^2 > 0$ by Corollary \ref{cor:intersection-number-fibration-quasiprojective}. We infer that
    \begin{align*}
        R^2\cdot Z' \geq R\cdot \pi^*H \cdot Z' = \deg H  R_\eta \cdot Z'_\eta >0,
    \end{align*}
    where the last inequality comes from Proposition~\ref{prop:restriction-to-curve-has-positive-degree}.
\end{proof}
Define strongly nef adelic divisors $\overline{M}^\pm_g\coloneqq p_1^*\overline{R} + p_2^* \overline{\theta}^+_g$. Applying Lemma~\ref{lem: Z'non-degenerate}, a similar computation as in Case (i) implies that $M_g^\pm$ is big on $Z$: $(M_h^+)^3 \cdot Z >0$. Thus the two measures $c_1(\overline{M}^\pm_g)^3 \wedge [Z]$ are proportional and for $\pi_*\left(c_1(\overline{R})^2\wedge [Z']\right)$-almost every parameter $b$, the measures $\ddc G_{g_b}^\pm|_{Z_b}$ are proportional. We infer again that $|\Jac(g_b)|=1$ and we conclude as in Case (i).

The only possible case is Case (ii.a) and the proof of Theorem~\ref{thm:small-height} is complete.
\end{proof}

\begin{proof}[End of proof of Theorem \ref{bigthmB}].
To finish the proof of Theorem~\ref{bigthmB}, we need to show that, up to taking a larger $D$, for any transcendental $b\in B(\C \setminus \overline{K})$, Equation~\eqref{eq:uniformperiodicpoints}
holds. Denote by $D_\eta$ the number of common periodic points of $f_\eta$ and $g_\eta$ at the generic fiber. Denote by $\kappa_b$ the residue field of $b$. Then there is a field embedding of $\overline{K}(B) \to \kappa_b$. Hence the number of common periodic points of $f_b$ and $g_b$ is also equal to $D_\eta$. Finally we take the new constant to be $\max\{D_\eta, D\}$. This ends the proof of Theorem~\ref{bigthmB}.
\end{proof}

\begin{proof}[End of proof of Theorem \ref{bigthmA}]
To complete the proof of Theorem~\ref{bigthmA}, it remains to consider the parameters $b$ for which there are infinitely many common periodic points. By Theorem~\ref{thm:dissipativecommoniterate}, this implies that $f_b$ and $g_b$ share a common iterate: there exist integers $N_b$ and $M_b$ such that $f_b^{N_b}=g_b^{M_b}$. Since the set of such parameters is finite, we may choose $N_b$ and $M_b$ independently of $b$. This concludes the proof of Theorem~\ref{bigthmA} in the dynamically independent case. If, on the other hand, $f$ and $g$ are dynamically dependent, then by definition they globally share a common iterate.
\end{proof}

\bibliographystyle{alpha}
\bibliography{biblio}
\end{document}